\begin{document}

\title{\vspace*{-3cm} Linear quadratic stochastic control problems
  with stochastic terminal constraint\footnote{We are grateful to
    Paulwin Graewe, Ulrich Horst, Peter Imkeller and Alexandre Popier
    for insightful discussions. Our paper has also greatly benefited
    from the comments and suggestions of two anonymous referees.}}

\author{ Peter Bank\footnote{Technische Universit{\"a}t Berlin,
    Institut f{\"u}r Mathematik, Stra{\ss}e des 17. Juni 136, 10623
    Berlin, Germany, email \texttt{bank@math.tu-berlin.de}.  Financial
    support by Einstein Foundation through project ``Game options and
    markets with frictions'' is gratefully acknowledged.}
  \hspace{4ex} Moritz Vo{\ss}\footnote{Technische Universit{\"a}t
    Berlin, Institut f{\"u}r Mathematik, Stra{\ss}e des 17. Juni 136,
    10623 Berlin, Germany, email \texttt{voss@math.tu-berlin.de}.}  }
\date{\today}

\maketitle
\begin{abstract}
  We study a linear quadratic optimal control problem with stochastic
  coefficients and a terminal state constraint, which may be in force
  merely on a set with positive, but not necessarily full
  probability. Under such a partial terminal constraint, the usual
  approach via a coupled system of a backward stochastic Riccati
  equation and a linear backward equation breaks down. As a remedy, we
  introduce a family of auxiliary problems parametrized by the
  supersolutions to this Riccati equation alone. The target functional
  of these problems dominates the original constrained one and allows
  for an explicit description of both the optimal control policy and
  the auxiliary problem's value in terms of a suitably constructed
  optimal signal process. This suggests that, for the minimal
  supersolution of the Riccati equation, the minimizers of the
  auxiliary problem coincide with those of the original problem, a
  conjecture that we see confirmed in all examples understood so far.
\end{abstract}

\begin{description}
\item[Mathematical Subject Classification (2010):] 93E20, 60H10,
  91G80
\item[Keywords:] linear quadratic control, stochastic terminal
  constraint, backward stochastic Riccati differential equation,
  optimal tracking, optimal signal process
\end{description}


\section{Introduction}
Linear quadratic stochastic optimal control problems (stochastic LQ
problems in short) represent an important class of stochastic control
problems and are very well studied in the literature, cf., e.g., the
book by \citet{YongZhou:99}, Chapter 6, for an overview. A prototype
of a stochastic LQ problem with linear quadratic cost functional is
given by the so-called \emph{optimal follower} or \emph{optimal
  tracking} problem where one seeks to minimize a cost criterion of
the following form: For a deterministic time horizon $T>0$, for a
predictable target process $(\xi_t)_{0 \leq t \leq T}$ as well as
progressively measurable, nonnegative processes
$(\nu_t)_{0 \leq t \leq T}$ and $(\kappa_t)_{0 \leq t \leq T}$, for
random variables $\eta$ and $\Xi_T$ known at time $T$ and $x \in \RR$,
find a control $u$ with state process
\begin{equation*}\label{eq:stateprocess}
X^u_t = x + \int_0^t u_s ds \quad (0 \leq t \leq T)
\end{equation*}
which minimizes the objective
\begin{equation} \label{eq:objectiveintro}
  J^\eta(u) \set \EE \left[ \int_0^T (X^u_t - \xi_t)^2 \nu_t dt 
   + \int_0^T \kappa_t u^2_t dt + \eta
   (X^u_T - \Xi_T)^2 \right].
\end{equation}
The interpretation of such an LQ problem is the following: The first
term in~\eqref{eq:objectiveintro} measures the overall quadratic
deviation of the controlled state process $X^u$ from the target
process $\xi$ weighted with a stochastic weight process $\nu$. The
second term in \eqref{eq:objectiveintro} measures the incurred
tracking effort in terms of running quadratic costs which are imposed
on the control~$u$ with stochastic cost process $\kappa$. The third
term in \eqref{eq:objectiveintro} implements a penalization on the
quadratic deviation of the controlled state $X^u_T$ from the final
target position~$\Xi_T$ at terminal time $T$ with nonnegative random
penalization parameter~$\eta$.

 It is well known in the literature that
the optimal control to such a stochastic LQ problem as well as its
optimal value is typically characterized by two coupled backward
stochastic differential equations (BSDEs): A backward stochastic
\emph{Riccati} differential equation (BSRDE) of the form
\begin{equation} \label{eq:BSRDEintro} 
  dc_t = \left(
    \frac{c_t^2}{\kappa_t} - \nu_t \right) dt - dN_t \quad \text{on }
  [0,T] \text{ with } c_T = \eta
\end{equation}
and a linear BSDE of the form
\begin{equation} \label{eq:linBSDEintro} 
  db_t = \left(
    \frac{c_t}{\kappa_t} b_t - \nu_t \xi_t \right) dt + dM_t \quad \text{on }
  [0,T] \text{ with } b_T = \eta \Xi_T,
\end{equation}
where $N$ and $M$ denote suitable c\`adl\`ag martingales (cf., e.g.,
\citet{KohlmannTang:02}, Section 5.1).

A number of interesting challenges arise when one allows the terminal
penalization parameter $\eta$ to take the value infinity with positive
(not necessarily full) probability. It is then intuitively sensible to
interpret the ``blow up'' of $\eta$ as a \emph{stochastic
terminal state} constraint of the form
\begin{equation} \label{eq:tcintro} 
  X^u_T = \Xi_T \quad \text{a.e. on the set } \{ \eta = + \infty \}
\end{equation}
on all controlled processes $X^u$ that produce a finite value in
\eqref{eq:objectiveintro}. Mathematically, it is less obvious how to
tackle this delicate ``partial'' constraint and how to compute the
optimal control as well as the optimal value. Indeed, note that the
involved BS(R)DEs in \eqref{eq:BSRDEintro} and \eqref{eq:linBSDEintro}
will both now exhibit with positive probability a \emph{singularity at final
time} in this case. The possibly singular BSRDE in
\eqref{eq:BSRDEintro} does not pose a serious problem; see
\citet{KrusePopier:16_1} and \citet{Popier:16}. In contrast, the
singularity in the terminal condition of the linear BSDE in
\eqref{eq:linBSDEintro} is rather unpleasant because it also involves
the desired target position $\Xi_T$, leaving the terminal condition
$b_T = \eta \Xi_T$ depend solely on the sign of $\Xi_T$ on the very set
$\{\eta=+\infty\}$ where this random variable has to be matched by the
state processes' terminal value $X^u_T$. 

As a consequence, the classical solution approach cannot be followed
directly. Instead we introduce a family of auxiliary target
functionals
$$
J^c(u) \set \limsup_{\tau \uparrow T} \EE \left[ \int_0^\tau (X^u_t - \xi_t)^2 \nu_t dt +
    \int_0^\tau \kappa_t u^2_t dt + c_\tau (X^u_\tau -
    \hat{\xi}^c_\tau)^2 \right]
$$ 
parametrized by supersolutions $c$ of the BSRDE~\eqref{eq:BSRDEintro}
and where $\hat{\xi}^c_\tau$ is an \emph{optimal signal process} constructed
as a judiciously chosen average of future target positions
$(\xi_t)_{t \geq \tau}$ and $\Xi_T$. The target functional $J^c$ avoids
the singularity at time $T$ by a ``truncation in time'' focussing
on shorter time horizons $\tau<T$ at which we impose a ``classical''
finite terminal penalization. This penalization is chosen in such a
way that the corresponding optimizers can be extended consistently to
the full interval $[0,T)$ as $\tau \uparrow T$. In fact, the
corresponding auxiliary minimization problems turn out to be solvable
in a very satisfactory way: As already observed in a much simpler
setting in~\citet{BankSonerVoss:16}, we can give necessary and
sufficient conditions for the domain $\{J^c<\infty\}$ to be nonempty
and we can also describe explicitly the optimal control in feedback
form
$$
\hat{u}^c_t = \frac{c_t}{\kappa_t}(\hat{\xi}^c_t-X^{\hat{u}^c}_t)
\quad (0 \leq t < T),
$$
revealing that one should always push the controlled process towards
the optimal signal $\hat{\xi}^c$ with time-varying urgency given by
the ratio $c_t/\kappa_t$. We can even show how the regularity and
predictability of the targets $\xi$ and $\Xi_T$ as reflected in the
signal process $\hat{\xi}^c$ and its quadratic variation determine the
problem's value.

We show that for the considered supersolutions $c$ of the
BSRDE~\eqref{eq:BSRDEintro} we have $J^c(u) \geq J^\eta(u)$. This
leads us to the conjecture that for the \emph{minimal} supersolution
$c=c^{\min}$ (whose existence is guaranteed under mild conditions; see
\citet{KrusePopier:16_1}) the minimizers of these functionals are the same. While we
have to leave the proof of this conjecture for future research that
allows one to better control singular BSRDE supersolutions, we do verify
the validity of our conjecture in the examples we found in the literature.

Stochastic control problems, referred to as optimal liquidation
problems in the literature, with almost sure (i.e., $\eta \equiv + \infty$
almost surely) and deterministic terminal state constraint (targeting
the terminal position $\Xi_T = 0$), where the cost functional is
allowed to be quadratic in $X^u$ and $u$ (that is, $\xi \equiv 0$ in
\eqref{eq:objectiveintro}) have already been studied in, e.g.,
\citet{Schied:13}, \citet{AnkirchnerJeanblancKruse:14} and, in a more
general BSPDE framework, in \citet{GraeweHorstQiu:15}; allowing the
penalization parameter~$\eta$ to take the value infinity with positive
probability has been investigated in
\citet{KrusePopier:16_1}. \citet{AnkirchnerKruse:15}, still within
this context of optimal liquidation, allow the objective functional to
be additionally linear in the control $u$. They also incorporate a
specific nonzero stochastic terminal state constraint where the random
target position $\Xi_T$ is gradually revealed up to terminal time
$T$. A general class of stochastic control problems including LQ
problems with terminal states being constrained to a convex set were
studied by \citet{JiZhou:06}. However, to the best of our knowledge,
stochastic linear quadratic control problems with $\xi \neq 0$ and
possible stochastic terminal state constraint $\Xi_T \neq 0$ as
considered in the present paper have not yet been investigated.

The analysis of the stochastic LQ problem in \eqref{eq:objectiveintro}
above is especially motivated by optimal trading and hedging problems
in Mathematical Finance. In this framework the state process $X^u$
denotes an agent's position in some risky asset that she trades at a
turnover rate $u$. She wants her position to be as close as possible
to a given target strategy $\xi$ but simultaneously seeks to minimize
the induced quadratic transaction costs which are levied on her
transactions due to, e.g., stochastic price impact as measured by
$\kappa$. The weight process $\nu$ captures stochastic volatility,
that is, the risk of her open trading position due to random market
fluctuations. Finally, in case of a possible but not necessarily
almost sure occurrence of specific market conditions, encoded by the
event set $\{ \eta = + \infty \}$, she may be required to drive her
position~$X^u$ imperatively towards a predetermined random value
$\Xi_T$ at maturity~$T$ (e.g., to respect specific requirements of
contractual or regulatory nature concerning her risky asset
position). Otherwise, a penalization depending on the deviation of
$X^u_T$ from the target position $\Xi_T$ is implemented. We refer to,
e.g., \citet{RogerSin:10}, \citet{NaujWes:11}, \citet{AlmgrenLi:16},
\citet{FreiWes:13}, \citet{CarJai:15}, \citet{CaiRosenbaumTankov:15},
\citet{BankSonerVoss:16} and, for asymptotic considerations, to
\citet{ChanSircar:16}. Note, however, that the above cited papers may
neither allow for an arbitrary predictable target strategy $\xi$ nor
for stochastic price impact $\kappa$ and stochastic
volatility~$\nu$. In particular, none of them consider a stochastic
terminal state constraint like~\eqref{eq:tcintro} above with general
random target position~$\Xi_T$.

The rest of the paper is organized as follows. In Section
\ref{sec:stochLQproblem} we formulate the general stochastic LQ
problem with stochastic terminal state constraint. Our auxiliary
control problem and its solution are presented in
Section~\ref{sec:auxproblem}. Its relation to the original LQ problem
is discussed and exemplarily illustrated in
Section~\ref{sec:illustration}. The proofs are deferred to
Section~\ref{sec:proofs} and an appendix collects a
few BSDE-results which may be of independent interest.


\section{A stochastic LQ problem with stochastic terminal state
  constraint}
\label{sec:stochLQproblem}

We fix a finite deterministic time horizon $T > 0$ and a filtered
probability space $(\Omega,\mathcal{F},(\cF_t)_{0 \leq t \leq T},\PP)$
satisfying the usual conditions of right continuity and completeness.
We let $(\kappa_t)_{0 \leq t \leq T}$ and $(\nu_t)_{0 \leq t \leq T}$
denote two progressively measurable, nonnegative processes such
that
\begin{equation} \label{eq:condkappanu}
  \int_0^T \left(\nu_t + \frac{1}{\kappa_t} \right) dt <
  \infty \quad \PP\text{-a.s.}
\end{equation}
Moreover, we are given a predictable target process
$(\xi_t)_{0 \leq t \leq T}$ satisfying
\begin{equation} \label{eq:condxi} 
  \EE \left[ \int_0^T \vert \xi_t \vert \nu_t dt \right]
    < \infty \quad \text{and} \quad \int_0^T \xi^2_t \nu_t dt < \infty
    \quad \PP\text{-a.s.},
\end{equation}
a random terminal target position $\Xi_T \in L^0(\PP,\cF_{T-})$ as
well as an~$\cF_{T-}$-measurable penalization parameter
$\eta$ taking values in $[0,+\infty]$. We further assume
that
\begin{equation} \label{ass:etakappa}
  \PP\left[ \eta = 0\, , \int_t^T \nu_u du = 0 \, \bigg\vert\,
  \cF_t\right] < 1 \quad \PP\text{-a.s. for all } t \in [0,T).
\end{equation}
For such $\nu, \kappa, \xi, \Xi_T$ and $\eta$,
one can formulate the following stochastic linear quadratic
optimal control problem: Find a control $u$ from the class of
processes
\begin{equation} \label{eq:setU}
\cU \set \left\{ u \text{ progressively
    measurable s.t.} \int_0^T \vert u_t \vert \, dt < \infty \text{ $\PP$-a.s.} \right\}
\end{equation}
such that, for given $x \in \mathbb{R}$, the controlled state process
\begin{equation} \label{eq:defX}
  X^u_t \set x + \int_0^t u_s ds \quad (0 \leq t \leq T)
\end{equation}
minimizes the objective functional
\begin{equation} \label{eq:originalobjective}
   J^\eta(u) \set \EE \left[ \int_0^T (X^u_t - \xi_t)^2 \nu_t dt 
   + \int_0^T \kappa_t u^2_t dt + \eta 1_{\{ 0 \leq \eta < \infty \}}
   (X^u_T - \Xi_T)^2 \right]
\end{equation}
over the set of all constrained policies
\begin{align} \label{def:originalpolicies}
\begin{aligned}
\cU^{\Xi} \set \Big\{ u \in \cU \text{ satisfying }  X^u_T = \Xi_T
\text{ $\PP$-a.s. on } \{ \eta = + \infty\}  \Big\}.
\end{aligned}
\end{align}
In short, we are interested in the stochastic LQ problem
\begin{equation} \label{eq:originalLQP}
J^\eta(u) \rightarrow \min_{u \in \cU^{\Xi}},
\end{equation}
where the controller seeks to keep the controlled process $X^u$ close
to a given target process $\xi$ in such a way that deviations from
the final target position~$\Xi_T$ are also minimized. On
$\{ \eta = + \infty \}$, the final target position has to be reached a.s. as
incorporated in the set of admissible strategies $\cU^{\Xi}$
in~\eqref{def:originalpolicies}.

\begin{Remark} \label{rem:intro} 
$\phantom{}$
\vspace{-.5em}
\begin{enumerate}
\item In case where the random penalization parameter $\eta$ is finite
  almost surely, the optimization problem in~\eqref{eq:originalLQP}
  does not include a terminal state constraint and boils down to a
  classical stochastic optimal control problem which is well studied
  in the literature; c.f., e.g.,~\citet{KohlmannTang:02}. 
\item The dynamic condition~\eqref{ass:etakappa} is
  very natural for our optimal tracking problem
  in~\eqref{eq:originalLQP}. It means that at any time $t < T$ some
  penalization for deviating from the targets $\xi$ and $\Xi_T$ remains
  conceivable, even conditionally on $\cF_t$, so that the controller
  has to stay alert all the way until the end.
\item The mild integrability conditions in \eqref{eq:condkappanu},
  \eqref{eq:condxi} and \eqref{eq:setU} ensure that the stochastic LQ
  problem in~\eqref{eq:originalLQP} is well defined along with some
  processes to be introduced shortly.
\end{enumerate}
\end{Remark}

Mathematically, the stochastic terminal state constraint
\begin{equation} \label{eq:terminalconstraint}
X^u_T = \Xi_T \quad \text{a.e. on } \{ \eta = + \infty \}
\end{equation}
in the set of allowed controls $\cU^\Xi$
in~\eqref{def:originalpolicies} entails technical difficulties. For
instance, it is far from obvious under what conditions we have
$\cU^{\Xi} \neq \varnothing$ or whether $J^\eta (u) < \infty$ for some
$u \in \cU^\Xi$. Also, as explained in the introduction, the usual
solution approach via BSDEs does not accommodate this partial
constraint.

As a possible remedy, instead of tackling the constrained stochastic
LQ problem posed in~\eqref{eq:originalLQP}, we propose to formulate a
suitable variant of this problem. Specifically, we will introduce a
family of stochastic control problems
\begin{equation} \label{eq:auxLQP}
J^c(u) \rightarrow \min_{u \in \cU^c}
\end{equation}
with set of admissible controls
\begin{equation} \label{eq:policies}
\cU^c \set \{ u \in \cU \text{ satisfying } J^c(u) < \infty \}
\end{equation}
and target functional $J^c$ which are parametrized by
supersolutions $c\set(c_t)_{0 \leq t < T}$ to a certain singular
backward stochastic differential equation (BSDE) to be described below
in Section~\ref{subsec:BSRDE}. These auxiliary problems will dominate
the stochastic LQ problem stated in~\eqref{eq:originalLQP} in the
sense that, for all parametrizations~$c$, we have
\begin{equation} \label{eq:upperboundJ}
J^c(u) \geq J^\eta(u) \quad \text{for all } u \in \cU^c
\end{equation} 
and
\begin{equation} \label{eq:subsetJ}
\cU^c \subseteq \cU^{\Xi}
\end{equation}
(cf. Lemma~\ref{lem:dominate} below). We will show in
Section~\ref{subsec:mainresult} that our auxiliary problems
in~\eqref{eq:auxLQP} can be solved in a very satisfactory way: In
terms of $\xi$, $\Xi_T$ and the parameter process $c$, we provide
necessary and sufficient conditions which ensure that
$\cU^c \neq \varnothing$ and describe explicitly the optimal 
policy $\hat{u}^c$ for~\eqref{eq:auxLQP} as well as the associated
minimal costs $J^c(\hat{u}^c)$. In view of~\eqref{eq:upperboundJ}
and~\eqref{eq:subsetJ}, we thus obtain both explicit candidate
strategies for the general constrained stochastic LQ problem
formulated in~\eqref{eq:originalLQP} as well as conditions which
guarantee existence of controls entailing finite costs in the
latter.

To link these problems to the original problem~\eqref{eq:originalLQP}
it is natural to consider ``small'' solutions to the BSDE. In fact, we
conjecture that for the \emph{minimal supersolution} $c^{\min}$ of the
BSDE we have
\begin{equation} \label{eq:conjecture}
\argmin_{\cU^\Xi} J^\eta = \argmin_{\cU^{c^{\min}}} J^{c^{\min}}.
\end{equation}
While we cannot prove this conjecture in full generality, we provide
in Section~\ref{sec:illustration} evidence for its validity in certain
settings. These include the case of bounded coefficients, but also
some singular cases where, possibly, $\PP[\eta = +\infty] > 0$.


\section{An auxiliary control problem}
\label{sec:auxproblem}

In this section, we will formulate and solve our auxiliary stochastic
LQ problem~\eqref{eq:auxLQP} for fixed $c$. The process $c$ will be a
supersolution to a BSRDE which we discuss in
Section~\ref{subsec:BSRDE}. In
Section~\ref{subsec:problemformulation}, we will introduce our target
functional $J^c$ whose minimizer $\hat{u}^c$ is derived in
Section~\ref{subsec:mainresult} along with the optimal costs
$J^c(\hat{u}^c)$.

\subsection{Connection between stochastic LQ problems and BSRDEs}
\label{subsec:BSRDE}

It is well known in the literature that the solution to stochastic
LQ problems like~\eqref{eq:originalLQP} is intimately related to backward
stochastic Riccati differential equations (BSRDEs):
For~\eqref{eq:originalLQP}, the Riccati dynamics take the form
\begin{equation} \label{eq:BSRDE} 
  dc_t = \left( \frac{c_t^2}{\kappa_t}
    - \nu_t \right) dt - dN_t \quad \text{on } [0,T)
\end{equation}
for some c\`adl\`ag martingale $(N_t)_{0 \leq t < T}$; cf.,
e.g.,~\citet{Bismut:76,Bismut:78}. Moreover, the recent papers by,
e.g.,~\citet{AnkirchnerJeanblancKruse:14},~\citet{KrusePopier:16_1},~\citet{GraeweHorstQiu:15}
or~\citet{GraeweHorst:17} have shown that a terminal state constraint
as~\eqref{eq:terminalconstraint} in the LQ problem typically leads to
a singular terminal condition for the corresponding BSRDE of the form
\begin{equation} \label{eq:BSRDEtc} 
  \liminf_{t \uparrow T} c_t \geq
  \eta \quad \PP\text{-a.s.}
\end{equation}
This motivates us to let $c = (c_t)_{0 \leq t < T}$ denote from now on
an $(\cF_t)_{0 \leq t < T}$-adapted, c\`adl\`ag semimartingale with
BSRDE dynamics~\eqref{eq:BSRDE} and terminal
condition~\eqref{eq:BSRDEtc}. In addition, we will assume that
\begin{equation} \label{eq:BSRDEintcond2} 
  \int_{[0,T)}
  \frac{d[c]_t}{c_{t-}^2} < \infty \quad \text{on the set } \{ \eta =
  + \infty\},
\end{equation}
where $[c]$ denotes the quadratic variation process of~$c$ (cf., e.g.,
\citet{Prot:04}, Chapter II.6, for the quadratic variation process of
c\`adl\`ag semimartingales).

\begin{Remark} \label{rem:BSRDE} 
  \begin{enumerate}
  \item As usual the dynamics in \eqref{eq:BSRDE} have to be
    understood in the sense that the pair $(c,N)$ satisfies
    \begin{equation} \label{eq:BSRDEintegral}
      c_s = c_t - \int_s^t \left(
        \frac{c_u^2}{\kappa_u} - \nu_u \right) du + \int_s^t dN_u
      \quad (0 \leq s \leq t <T).
    \end{equation}
    In particular, the dynamics in \eqref{eq:BSRDE} are only required
    to hold on $[0,T-\epsilon]$ for every $\epsilon > 0$, that is,
    strictly before~$T$. So, more precisely, we will say that $(c,N)$
    is a supersolution of the BSRDE~\eqref{eq:BSRDE} with terminal
    condition $\eta$ if~\eqref{eq:BSRDEintegral}
    and~\eqref{eq:BSRDEtc} hold true.

  \item For bounded coefficients $\nu$, $\kappa$, $1/\kappa$,
    $\eta$,~\citet{KohlmannTang:02} prove within a Brownian framework
    existence and uniqueness of $c$ with dynamics in~\eqref{eq:BSRDE}
    such that $\lim_{t \uparrow T} c_t = \eta$ exists $\PP$-a.s. For
    the fully singular case $\eta \equiv + \infty$ $\PP$-a.s. and
    again within a Brownian framework, existence of a minimal solution
    (under suitable integrability conditions on the processes
    $(\nu_t)_{0 \leq t \leq T}$ and $(\kappa_t)_{0 \leq t \leq T}$) to
    the above BSRDE in~\eqref{eq:BSRDE} with singular terminal
    condition $\liminf_{t \uparrow T} c_t = + \infty$ $\PP$-a.s. are
    provided in \citet{AnkirchnerJeanblancKruse:14}; cf. also
    \citet{GraeweHorstQiu:15}. For the present partially singular setup,
    \citet{KrusePopier:16_1} provide sufficient conditions (including
    suitable integrability conditions on
    $(\kappa_t)_{0 \leq t \leq T}$ and $(\nu_t)_{0 \leq t \leq T}$)
    for the existence of a minimal supersolution
    $(c^{\min}_t)_{0 \leq t \leq T}$ to the above BSRDE
    in~\eqref{eq:BSRDE} with terminal condition~\eqref{eq:BSRDEtc} in
    the sense that $c_t^{\min} \leq c_t$ for all $t \in [0,T)$ and all
    processes $c$ satisfying likewise~\eqref{eq:BSRDE}
    and~\eqref{eq:BSRDEtc}. Existence of actual solutions $c$ with
    $\lim_{t \uparrow T} c_t=\eta$ is only known under additional
    assumptions on $\eta$; see~\citet{Popier:16}.

  \item The additional integrability
    condition~\eqref{eq:BSRDEintcond2} on the ``blow up'' set
    $\{ \eta = + \infty \}$ is implicitly shown to hold true
    in~\citet{Popier:06} in a Brownian framework for constant
    coefficients $\nu \equiv 0$ and $\kappa \equiv 1$; see Theorem 2
    and Proposition 3 in \cite{Popier:06}. We require this
    integrability condition~\eqref{eq:BSRDEintcond2} in our proof of
    Lemma~\ref{lem:L} below whose result crucially feeds into our
    solution presented in Section~\ref{subsec:mainresult}. We will
    therefore briefly discuss exemplarily in the appendix sufficient
    conditions on $(\kappa_t)_{0 \leq t \leq T}$,
    $(\nu_t)_{0 \leq t \leq T}$ and $\eta$ under which property
    \eqref{eq:BSRDEintcond2} does hold true in the more generic
    setting of~\citet{KrusePopier:16_1}.
  \end{enumerate}
\end{Remark}

As a consequence of~\eqref{eq:BSRDE}, \eqref{eq:BSRDEtc}
and~\eqref{eq:BSRDEintcond2}, let us first ascertain that $c$ is
strictly positive on $[0,T)$, a result which is crucial for our
approach below and which follows immediately from Lemma~\ref{app:lem:lbound}
in the appendix. 

\begin{Lemma} \label{lem:cpositivity} For all $t \in [0,T)$ we have
  $c_t > 0$ if \eqref{ass:etakappa} holds true. \qed
\end{Lemma}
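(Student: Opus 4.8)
The plan is to remove the quadratic (Riccati) nonlinearity in \eqref{eq:BSRDE} by an exponential integrating factor and then read off positivity from a probabilistic representation. Fix $t\in[0,T)$ and, for $t\le s<T$, set $A_s\set\int_t^s \frac{c_u}{\kappa_u}\,du$; this is well defined and finite because $c$ is c\`adl\`ag, hence locally bounded on every compact $[t,r]\subset[0,T)$, while $\int_0^T \kappa_u^{-1}\,du<\infty$ by \eqref{eq:condkappanu}. Applying integration by parts to $Y_s\set c_s e^{-A_s}$ and using \eqref{eq:BSRDE}, the drift contribution $c_s^2/\kappa_s$ cancels against the term $-c_s(c_s/\kappa_s)$ coming from $d\,e^{-A_s}$, so that
\[ dY_s = -e^{-A_s}\nu_s\,ds - e^{-A_s}\,dN_s, \]
i.e. $Y$ equals a nonincreasing process plus the local martingale $-\int e^{-A}\,dN$. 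Integrating from $t$ to $r$ and using $A_t=0$ yields, for every $t\le r<T$,
\[ c_t = Y_r + \int_t^r e^{-A_u}\nu_u\,du + \int_t^r e^{-A_u}\,dN_u. \]

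Next I would localise the local martingale by stopping times $\tau_n\uparrow T$ and take conditional expectations, obtaining $c_t=\EE\big[\,Y_{r\wedge\tau_n}+\int_t^{r\wedge\tau_n} e^{-A_u}\nu_u\,du\,\big|\,\cF_t\big]$. Letting $n\to\infty$ and then $r\uparrow T$, the singular terminal condition \eqref{eq:BSRDEtc} and $\nu\ge0$ make both the limiting terminal value $\lim_{r\uparrow T}Y_r$ and the running integral nonnegative, whence by Fatou $c_t\ge\EE\big[\lim_{r\uparrow T}Y_r+\int_t^T e^{-A_u}\nu_u\,du\,\big|\,\cF_t\big]\ge0$. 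Strict positivity then comes from \eqref{ass:etakappa}: the integrand on the right is strictly positive on $\{\int_t^T\nu_u\,du>0\}$, since $e^{-A_u}>0$ for all $u<T$ forces $\int_t^T e^{-A_u}\nu_u\,du>0$ there, and on $\{\int_t^T\nu_u\,du=0,\ 0<\eta<\infty\}$ the bound $A_r\le(\sup_{[t,T)}c)\int_t^T\kappa_u^{-1}\,du<\infty$ keeps $\lim_{r\uparrow T}Y_r\ge\eta\,e^{-A_{T-}}>0$. Hence the event on which the integrand vanishes is contained, up to the blow-up set treated below, in $\{\eta=0,\ \int_t^T\nu_u\,du=0\}$, which by \eqref{ass:etakappa} has conditional probability strictly less than one given $\cF_t$; therefore $\EE[\cdots\mid\cF_t]>0$ and $c_t>0$ $\PP$-a.s.

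I expect the main obstacle to be the behaviour as $r\uparrow T$ on the blow-up set $\{\eta=+\infty\}$, where $c_r\to+\infty$ and $A_r$ may diverge, so that it is not \emph{a priori} clear that $\lim_{r\uparrow T}Y_r=\lim_{r\uparrow T}c_r e^{-A_r}$ stays bounded away from $0$; this, together with the uniform integrability needed to justify passing the conditional-expectation and Fatou steps to the limit, is exactly where the integrability condition \eqref{eq:BSRDEintcond2} on $\int_{[0,T)} d[c]_u/c_{u-}^2$ enters, providing a strictly positive lower bound for $c$ on $\{\eta=+\infty\}$ as well. Making these limits rigorous is precisely the content of Lemma~\ref{app:lem:lbound} in the appendix, from which the assertion follows at once, the exponential change of variables above being the mechanism behind that bound.
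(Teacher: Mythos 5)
Your integrating-factor computation is correct (it is the same change of variables that produces the process $L$ in \eqref{eq:defL}--\eqref{eq:SDEL}), but the step where you extract a \emph{lower} bound on $c_t$ is circular. After localizing you have $c_t=\EE[\,Y_{r\wedge\tau_n}+\int_t^{r\wedge\tau_n}e^{-A_u}\nu_u\,du\,|\,\cF_t]$, and to pass to the limit with Fatou in the direction $c_t\ge\EE[\liminf_n(\cdots)\,|\,\cF_t]$ you need the integrands to be bounded from below by a fixed integrable random variable uniformly in $n$. The only natural such bound is $Y_{r\wedge\tau_n}=c_{r\wedge\tau_n}e^{-A_{r\wedge\tau_n}}\ge 0$, i.e.\ precisely the nonnegativity of $c$ that you are trying to prove; without it the stopped stochastic integral $\int e^{-A}\,dN$ may be a strict local martingale whose conditional expectations drift the wrong way, and no lower bound survives the limit. (Two smaller gaps: you invoke $\sup_{[t,T)}c<\infty$ on $\{0<\eta<\infty\}$ to keep $A_{T-}$ finite, but a c\`adl\`ag semimartingale on $[0,T)$ subject only to the $\liminf$ terminal condition \eqref{eq:BSRDEtc} need not be bounded on $[t,T)$; and the existence of $\lim_{r\uparrow T}Y_r$ is asserted without the supermartingale property of $L$, which in the paper is only available \emph{after} positivity is known, in Lemma~\ref{lem:L}.)

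The paper closes this circle differently: Lemma~\ref{app:lem:lbound} does not use the exponential change of variables at all, but compares $c^{\min}$ (and hence any supersolution) with the explicit processes $\Gamma^n_t=\EE[\,(\int_t^T\kappa_s^{-1}ds+(\eta\wedge n)^{-1})^{-1}\,|\,\cF_t]$, which are shown to satisfy a BSDE whose generator lies below that of \eqref{eq:BSRDE}; the classical comparison theorem for BSDEs together with the truncation construction of $c^{\min}$ then yields $c_t\ge\Gamma^n_t\ge 0$, and \eqref{ass:etakappa} upgrades this to strict positivity exactly as in your last step. So your final sentence correctly identifies the lemma to cite, but it mischaracterizes its mechanism, which is a comparison argument rather than the integrating factor; to make your direct route rigorous you would first need an a priori lower bound on $c$ (or a comparison principle) to legitimize the Fatou step.
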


Next, the BSRDE supersolution $c$ gives rise to the following auxiliary
process
\begin{equation} \label{eq:defL} 
  L_t \set L^c_t \set c_t \exp \left(-\int_0^t
    \frac{c_u}{\kappa_u} du \right) \quad (0 \leq t < T).
\end{equation}

\begin{Lemma} \label{lem:L} Granted~\eqref{ass:etakappa} holds true,
  the process $(L_t)_{0 \leq t < T}$ is a strictly positive c\`adl\`ag
  supermartingale. In particular,
  \begin{equation} \label{eq:defLlimit} 
    L_T \set\lim_{t
      \uparrow T} L_t \geq 0 \quad \text{exists } \PP\text{-a.s.}
  \end{equation}
  and the extended process $(L_t)_{0 \leq t \leq T}$ is a supermartingale on
  $[0,T]$. Moreover, we have $\{ \eta > 0\} \subset \{ L_T > 0\}$ up
  to a $\PP$-null set.
\end{Lemma}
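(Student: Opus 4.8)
The plan is to read the dynamics of $L$ off the product structure in \eqref{eq:defL} and exploit the cancellation built into the Riccati drift. Writing $A_t\set\exp(-\int_0^t c_u/\kappa_u\,du)$, the process $A$ is continuous, of finite variation, and strictly positive on $[0,T)$, since $c$ is c\`adl\`ag (hence locally bounded) and $\int_0^{T-\epsilon}1/\kappa_u\,du<\infty$ by \eqref{eq:condkappanu}. As $A$ is continuous of finite variation there is no covariation term, and integration by parts together with \eqref{eq:BSRDE} gives
\[
 dL_t = A_t\,dc_t + c_t\,dA_t = A_t\Big(\tfrac{c_t^2}{\kappa_t}-\nu_t\Big)dt - A_t\,dN_t - A_t\tfrac{c_t^2}{\kappa_t}\,dt = -A_t\nu_t\,dt - A_t\,dN_t,
\]
so the quadratic term cancels and $L_t=c_0-\int_0^t A_u\nu_u\,du-\int_0^t A_u\,dN_u$ is a local martingale minus the nondecreasing process $\int_0^t A_u\nu_u\,du$ (recall $A>0$, $\nu\ge0$). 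By Lemma~\ref{lem:cpositivity} and $A>0$ we have $L_t=c_tA_t>0$ on $[0,T)$, so $L$ is a strictly positive c\`adl\`ag local supermartingale; as a nonnegative local supermartingale is a supermartingale (localise and apply conditional Fatou, using $\EE[L_t]\le L_0<\infty$), $L$ is a genuine supermartingale. Supermartingale convergence then yields that $L_T\set\lim_{t\uparrow T}L_t\ge0$ exists $\PP$-a.s., and $\EE[L_T\mid\cF_s]\le\liminf_t\EE[L_t\mid\cF_s]\le L_s$ upgrades $L$ to a supermartingale on $[0,T]$. This settles all but the inclusion $\{\eta>0\}\subset\{L_T>0\}$.

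For the final assertion I pass to $\log L$. Applying It\^o's formula to $\log c$ (valid since $c>0$), substituting \eqref{eq:BSRDE} and cancelling $\int_0^t c_u/\kappa_u\,du$ against the one in $\log L_t=\log c_t-\int_0^t c_u/\kappa_u\,du$, I obtain
\[
 \log L_t = \log c_0 - \int_0^t\frac{\nu_u}{c_{u-}}\,du - \int_0^t\frac{dN_u}{c_{u-}} - \frac12\int_0^t\frac{d[c]^{c}_u}{c_{u-}^2} + \sum_{0<u\le t}\Big(\log\tfrac{c_u}{c_{u-}}-\tfrac{\Delta c_u}{c_{u-}}\Big),
\]
equivalently $L=c_0\,\cE(Y)$ with $Y=-\int_0^{\cdot}\nu_u/c_{u-}\,du-\int_0^{\cdot}c_{u-}^{-1}\,dN_u$ and jumps $\Delta Y_u=\Delta c_u/c_{u-}>-1$. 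The drift term is nonpositive, the continuous-quadratic-variation term is nonpositive, and $\log(1+x)-x\le0$ makes the jump sum nonpositive; hence $L_T>0$ is equivalent to showing that, as $t\uparrow T$, none of these three terms diverges to $-\infty$ and the local martingale $S\set\int_0^{\cdot}c_{u-}^{-1}\,dN_u$ stays finite.

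On $\{\eta=+\infty\}$, where $c_t\to+\infty$ by \eqref{eq:BSRDEtc}, this is exactly where \eqref{eq:BSRDEintcond2} enters. Splitting $\int_{[0,T)}d[c]_u/c_{u-}^2=\frac12\int d[c]^{c}_u/c_{u-}^2+\sum_{u<T}(\Delta c_u/c_{u-})^2<\infty$ controls the continuous-quadratic-variation term and forces only finitely many jumps with $|\Delta c_u/c_{u-}|>1/2$; for the small jumps $\log(1+x)-x\ge-x^2$ makes the jump sum summable, while each of the finitely many large jumps is finite because $c_u,c_{u-}>0$. The same bound $[S]_T=\int_0^T d[c]_u/c_{u-}^2<\infty$ forces $S$ to converge a.s. (a local martingale converges on $\{[S]_T<\infty\}$), and $c_t\to\infty$ together with $\int_0^T\nu_u\,du<\infty$ from \eqref{eq:condkappanu} bounds the drift term; hence $\log L_T>-\infty$, i.e.\ $L_T>0$, a.s.\ on $\{\eta=+\infty\}$. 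On $\{0<\eta<\infty\}$ one has $\liminf_{t\uparrow T}c_t\ge\eta>0$, and the argument reduces to the positivity of the monotone limit $A_T=\exp(-\int_0^T c_u/\kappa_u\,du)$: once $\int_0^T c_u/\kappa_u\,du<\infty$ one gets $A_T>0$, whence $c_t=L_t/A_t$ converges and $L_T=A_T\lim_t c_t\ge A_T\,\eta>0$.

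The main obstacle is precisely this integrability $\int_0^T c_u/\kappa_u\,du<\infty$ on $\{0<\eta<\infty\}$, i.e.\ $A_T>0$: a general supersolution is only controlled from below (Lemma~\ref{app:lem:lbound}), and its Riccati drift $c^2/\kappa$ pushes $c$ upward, so one must exclude that $c$ grows fast enough near $T$ to make $\int_0^T c/\kappa=\infty$ while $\liminf_{t\uparrow T}c_t$ remains finite. I would extract the required upper control from the supermartingale $L$ itself: on $\{\int_0^T c/\kappa=\infty\}$ one has $A_t\to0$ and hence $L_T=0$, so it suffices to show this cannot occur off a null set of $\{0<\eta<\infty\}$, converting—via $\int_0^T 1/\kappa\,du<\infty$ from \eqref{eq:condkappanu}—the integrated growth of $c/\kappa$ into genuine blow-up $c_t\to\infty$ incompatible with $\eta<\infty$, or else invoking the quantitative lower bound behind Lemma~\ref{lem:cpositivity}. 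On the singular set $\{\eta=+\infty\}$ the indeterminate form $\infty\cdot0$ in $L_T=\lim_t c_tA_t$ is resolved entirely by \eqref{eq:BSRDEintcond2}, and I expect this to be the technical heart of the statement.
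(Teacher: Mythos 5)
Your argument follows the paper's proof in all essential steps: the same integration-by-parts computation giving $dL_t=-A_t\nu_t\,dt-A_t\,dN_t$ (which is exactly the form \eqref{eq:SDEL} recorded in the paper, since $A_t=L_{t-}/c_{t-}$), the same chain ``positive local supermartingale $\Rightarrow$ supermartingale $\Rightarrow$ a.s.\ convergence and extension to $[0,T]$'', and, on $\{\eta=+\infty\}$, the same stochastic-exponential representation $L=c_0\,\cE(Y)$ with $Y=-\int_0^{\cdot}\nu_u/c_{u-}\,du-\int_0^{\cdot}c_{u-}^{-1}\,dN_u$, where \eqref{eq:BSRDEintcond2} simultaneously controls the continuous quadratic variation, the jump product and the convergence of the local martingale $\int_0^{\cdot}c_{u-}^{-1}\,dN_u$. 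These parts are correct and match the paper.

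The one point of divergence is the inclusion $\{0<\eta<\infty\}\subset\{L_T>0\}$. The paper dispatches it in a single sentence by ``appealing to the definition of $L$ and the terminal condition \eqref{eq:BSRDEtc}'', i.e.\ it treats $A_T=\exp(-\int_0^T c_u/\kappa_u\,du)>0$ as unproblematic there, whereas you correctly isolate this as the step needing an argument but leave it open. Two cautions about your proposed repair. First, the implication ``$A_T=0$ hence $L_T=0$'' is false in general: with constant coefficients and $\eta=+\infty$ one has $c_t\sim\kappa/(T-t)$, so $A_T=0$ while $L_T>0$; so this cannot even serve as a reduction. Second, ``$c_t\to\infty$ is incompatible with $\eta<\infty$'' is not available either, because \eqref{eq:BSRDEtc} is only the one-sided condition $\liminf_{t\uparrow T}c_t\geq\eta$, so a (non-minimal) supersolution may blow up on $\{\eta<\infty\}$. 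What is actually true and usable is: if $A_T>0$ then $c_t=L_t/A_t$ converges and $L_T\geq A_T\,\eta>0$; and $A_T=0$ forces $\limsup_{t\uparrow T}c_t=+\infty$ because $\int_0^T\kappa_u^{-1}\,du<\infty$ by \eqref{eq:condkappanu}. The residual task is thus to exclude the event $\{0<\eta<\infty\}\cap\{A_T=0,\ L_T=0\}$ --- a point on which the paper's own proof offers no more detail than yours, so you have not missed an idea that the paper supplies, but your sketch of how to close it would not go through as written.
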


\begin{proof}
  Since $c_t > 0$ $\PP$-a.s. for all $0 \leq t < T$ by
  Lemma~\ref{lem:cpositivity}, it is immediate from \eqref{eq:defL}
  that also $L_t > 0$ $\PP$-a.s. for all $0 \leq t <T$. Integration by
  parts and using the Riccati dynamics of $c$ in \eqref{eq:BSRDE}
  yields that~$L$ satisfies the stochastic differential equation
  \begin{equation}
    L_0 = c_0, \quad dL_t = 
    L_{t-} \left( - \frac{\nu_t}{c_{t-}} dt - \frac{1}{c_{t-}}
      dN_{t} \right) \quad \text{on } [0,T). \label{eq:SDEL}
  \end{equation}
  Since $N$ is a c\`adl\`ag local martingale on $[0,T)$, we obtain
  from~\eqref{eq:SDEL} that the process $L$ is a strictly positive
  c\`adl\`ag supermartingale on $[0,T)$. Hence, it follows by the
  (super-)martingale convergence theorem (see, e.g.,
  \citet{KaratShr:91}, Chapter 1.3, Problem 3.16) that the limit
  $L_T \set \lim_{t \uparrow T} L_t$ exists $\PP$-a.s. and extends the
  process $L$ to a c\`adl\`ag supermartingale on all of
  $[0,T]$. Moreover, appealing to the definiton of $L$ in
  \eqref{eq:defL} and the terminal condition
  $\liminf_{t \uparrow T} c_t \geq \eta$ of the process $c$
  in~\eqref{eq:BSRDEtc}, we have
  $\{0 < \eta < \infty \} \subset \{ L_T > 0\}$. Concerning the ``blow
  up'' set $\{ \eta = + \infty \}$, observe that we may write
  \begin{equation} \label{eq:solL}
    L_t = c_0 e^{X_t - \frac{1}{2}[X]^{\mathbf{c}}_t} \prod_{s \leq t} (1 + \Delta
    X_s) e^{-\Delta X_s} \quad (0 \leq t < T),
  \end{equation}
  where
  $X_t \set - \int_0^t \frac{\nu_s}{c_{s-}} ds - \int_0^t
  \frac{1}{c_{s-}} dN_{s}$ and where $[X]^{\mathbf{c}}$ denotes the
  continuous part of its quadratic variation (cf., e.g.,
  \citet{Prot:04}, Theorem II.37). Note that $L_s > 0$ $\PP$-a.s. for
  all $0 \leq s < T$ implies $\Delta X_s > -1$ for all $0 \leq s <
  T$. Moreover, applying Taylor's formula, it holds for all
  $0 \leq t < T$ that
  \begin{equation*}
    \sum_{s \leq t} \left\vert \log \left( (1 + \Delta
    X_s) e^{-\Delta X_s} \right) \right\vert \leq \frac{1}{2} \int_{[0,T)}
    \frac{1}{c^2_{s-}}d[c]_s < +\infty
  \end{equation*}
  a.e. on the set $\{ \eta = + \infty \}$ by virtue of condition
  \eqref{eq:BSRDEintcond2}. This implies that the product of the jumps
  in \eqref{eq:solL} will converge to a strictly positive limit as
  $t \uparrow T$ on $\{ \eta = + \infty \}$. Concerning the limiting
  behaviour of the exponential $\exp(X_t - \frac{1}{2}[X]^{\mathbf{c}}_t)$ in
  \eqref{eq:solL} for $t \uparrow T$, observe that once more condition
  \eqref{eq:BSRDEintcond2} prevents the limiting value from becoming 0
  on $\{ \eta = + \infty \}$. Indeed, the local martingale
  $\int_0^{t} dN_{s}/c_{s-}$ cannot explode as
  $t \uparrow T$ for those paths along which its quadratic variation
  $\int_0^t d[c]_s/c^2_{s-}$ remains bounded on $[0,T)$ (cf., e.g.,
  \citet{Prot:04}, Chapter V.2, for more details).
\end{proof}


\subsection{Auxiliary target functional}
\label{subsec:problemformulation}

Let us assume that the terminal target position $\Xi_T$ is bounded,
or, more generally, that it satisfies
\begin{equation} \label{ass:Xi}
  \Xi_T L_T \in L^1(\cF_{T-},\PP), 
\end{equation}
where
$L_T = L_T^c=\lim_{t \uparrow T} c_t e^{-\int_0^t c_u/\kappa_u du}$ as
in~\eqref{eq:defLlimit}. Recalling the integrability
requirement~\eqref{eq:condxi} for the running target $\xi$, let us now
introduce the key object for our approach, the optimal signal process
$\hat{\xi}$ which is given by the c\`adl\`ag semimartingale
\begin{equation} \label{eq:defoptsignal} 
  \hat{\xi}_t \set \hat{\xi}^c_t \set
  \frac{1}{L_t} \EE\left[ \Xi_T L_T + \int_t^T \xi_r e^{-\int_0^r
      \frac{c_u}{\kappa_u} du} \nu_r dr \, \bigg\vert \, \cF_t \right]
  \quad (0 \leq t < T).
\end{equation}
The optimal signal process $\hat{\xi}$ can be viewed as a weighted
average of expected future targets $\xi$ and $\Xi_T$; see our
discussion in Remark~\ref{rem:osp} and the representation of
$\hat{\xi}$ in \eqref{eq:repxi} below. Our motivation for introducing
$\hat{\xi}$ becomes very apparent when reviewing known results in the
literature to the stochastic LQ problem in~\eqref{eq:originalLQP} with
bounded coefficients; see Section 4.1 below. Observe that $\hat{\xi}$
remains unspecified for $t=T$. In fact, we can readily deduce from
Lemma~\ref{lem:L} and the integrability conditions~\eqref{eq:condxi}
and~\eqref{ass:Xi}
\begin{equation} \label{eq:limitos} 
  \exists \, \lim_{t \uparrow T}
  \hat{\xi}_t = \Xi_T \quad \text{on the set } \{ L_T > 0 \} \supset
  \{ \eta > 0 \}.
\end{equation}
On the set $\{ L_T = 0 \} \subset \{\eta = 0\}$, though, this
convergence may fail (without harm as it turns out).

Given the optimal signal process $(\hat{\xi}_t)_{0 \leq t < T}$, we
are now in a position to introduce the auxiliary LQ target functional
\begin{equation} \label{eq:defobjective} 
  J^c(u) \set \limsup_{\tau \,
    \uparrow \, T} \EE \left[ \int_0^\tau (X^u_t - \xi_t)^2 \nu_t dt +
    \int_0^\tau \kappa_t u^2_t dt + c_\tau (X^u_\tau -
    \hat{\xi}^c_\tau)^2 \right],
\end{equation}
where the limes superior is taken over all sequences of stopping times
$(\tau^n)_{n=1,2,\ldots}$ converging to terminal time $T$ strictly
from below. Introducing the set of admissible controls
\begin{equation} \label{eq:admissibility}
\cU^c = \left\{ u \in \cU \text{ satisfying } J^c(u) < + \infty \right\}
\end{equation}
as in~\eqref{eq:policies}, we will solve completely the
auxiliary optimization problem
\begin{equation} \label{eq:problem}
J^c(u) \rightarrow \min_{u \in \cU^c}
\end{equation}
in the next section.

\subsection{Explicit solution to the auxiliary problem}
\label{subsec:mainresult}

As it turns out, the optimal control to our auxiliary stochastic LQ
problem in~\eqref{eq:problem} and its corresponding optimal value are
explicitly computable and fully characterized by the processes $c$ and
$\hat{\xi}^c$. In terms of these, we can also characterize when the
set of admissible controls $\cU^c$ defined in \eqref{eq:admissibility}
is nonempty. In fact, it follows from our analysis below that
$\cU^c \neq \varnothing$ if and only if
\begin{equation} \label{ass:integrability} 
  \EE\left[ \int_0^T (\xi_t -
    \hat{\xi}^c_t)^2 \nu_t dt \right] < + \infty \quad \text{and} \quad
  \EE\left[ \int_{[0,T)} c_t d[\hat{\xi}^c]_t \right] < + \infty,
\end{equation}
where $[\hat{\xi}^c]$ denotes the quadratic variation process of the
semimartingale $\hat{\xi}^c$ of~\eqref{eq:defoptsignal}. In
particular, \eqref{ass:integrability} is necessary and sufficient for
well-posedness of the LQ problem in \eqref{eq:problem}:

\begin{Theorem} \label{thm:main} 
  Let \eqref{eq:condkappanu}, \eqref{eq:condxi}, and \eqref{ass:etakappa}
  hold true. In addition, suppose that $c$ follows the Riccati
  dynamics~\eqref{eq:BSRDE} with terminal condition~\eqref{eq:BSRDEtc}
  and satisfies the integrability conditions~\eqref{eq:BSRDEintcond2}
  and~\eqref{ass:Xi}.

  Then we have $\cU^c \neq \varnothing$ if and only if
  \eqref{ass:integrability} is satisfied. In this case, the optimal
  control $\hat{u}^c \in \cU^c$ for the auxiliary problem
  \eqref{eq:problem} with controlled process
  $\hat{X}^c_{\cdot} \set X^{\hat{u}^c}_{\cdot}$ is given by the
  feedback law
  \begin{equation} \label{eq:optimalcontrol} 
    \hat{u}^c_t =
    \frac{c_t}{\kappa_t} \left(\hat{\xi}^c_t - \hat{X}^c_t \right) \quad
    (0 \leq t < T),
  \end{equation}
  and the minimal costs are
  \begin{align}
    J^c(\hat{u}^c) = 
    c_0 ( x - \hat{\xi}^c_0)^2
    + \EE \left[ \int_0^T (\xi_t - \hat{\xi}^c_t)^2 \nu_t dt \right]
    + \EE \left[ \int_{[0,T)} c_t d[\hat{\xi}^c
    ]_t \right]. \label{eq:minimalcosts} 
  \end{align}
\end{Theorem}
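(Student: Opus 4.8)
The plan is to prove Theorem~\ref{thm:main} by a completion-of-squares argument carried out on the finite horizons $[0,\tau]$ and then passed to the limit $\tau \uparrow T$. The guiding identity is that, along any admissible control, the process $t \mapsto c_t(X^u_t - \hat{\xi}^c_t)^2$ should have a drift that exactly accounts for the running costs up to a perfect-square term in $u - \hat{u}^c$. Concretely, I would fix a control $u \in \cU$ and compute, via It\^o's product rule, the dynamics of $c_t(X^u_t - \hat{\xi}^c_t)^2$ using the Riccati dynamics~\eqref{eq:BSRDE} for $c$ and the semimartingale dynamics of $\hat{\xi}^c$ inferred from its definition~\eqref{eq:defoptsignal}. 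The key structural fact I expect to emerge is that, because $\hat{\xi}^c$ is built as a $c$-weighted conditional expectation of future targets, its finite-variation (drift) part is precisely tuned so that the cross terms combine into a clean completion of squares.

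\medskip

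\noindent\textbf{Step 1 (dynamics of the signal process).} First I would identify the semimartingale decomposition of $\hat{\xi}^c$. Writing $M_t \set \EE[\Xi_T L_T + \int_t^T \xi_r e^{-\int_0^r c_u/\kappa_u\,du}\,\nu_r\,dr \mid \cF_t]$ and noting $L_t\hat{\xi}^c_t = M_t + \int_0^t \xi_r e^{-\int_0^r c_u/\kappa_u\,du}\nu_r\,dr - \Xi_T L_T$ is an adjustment of a martingale, I would differentiate $\hat{\xi}^c = (L\hat{\xi}^c)/L$ using the SDE~\eqref{eq:SDEL} for $L$. This yields the drift of $\hat{\xi}^c$ in terms of $\nu_t(\xi_t - \hat{\xi}^c_t)/c_t$ plus a local-martingale part, which is the crucial ingredient for the cancellation.

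\medskip

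\noindent\textbf{Step 2 (completion of squares on $[0,\tau]$).} Applying It\^o's formula to $c_t(X^u_t - \hat{\xi}^c_t)^2$ and integrating over $[0,\tau]$, I expect an identity of the schematic form
\begin{equation*}
  c_\tau(X^u_\tau - \hat{\xi}^c_\tau)^2 = c_0(x-\hat{\xi}^c_0)^2 - \int_0^\tau \kappa_t\bigl(u_t - \hat{u}^c_t\bigr)^2\,dt + \int_0^\tau \kappa_t \hat{u}^{c,2}_t\,dt - \int_0^\tau(X^u_t-\xi_t)^2\nu_t\,dt + \int_0^\tau(\xi_t-\hat{\xi}^c_t)^2\nu_t\,dt + \int_{[0,\tau)}c_t\,d[\hat{\xi}^c]_t + (\text{local martingale}),
\end{equation*}
where $\hat{u}^c$ is the feedback~\eqref{eq:optimalcontrol}. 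Rearranging and inserting into the definition~\eqref{eq:defobjective} of $J^c(u)$ gives, after taking $\limsup_{\tau \uparrow T}$ and using the nonnegativity $c_\tau(X^u_\tau-\hat{\xi}^c_\tau)^2 \geq 0$,
\begin{equation*}
  J^c(u) = c_0(x-\hat{\xi}^c_0)^2 + \EE\!\left[\int_0^T(\xi_t-\hat{\xi}^c_t)^2\nu_t\,dt\right] + \EE\!\left[\int_{[0,T)}c_t\,d[\hat{\xi}^c]_t\right] + \limsup_{\tau\uparrow T}\EE\!\left[\int_0^\tau \kappa_t(u_t-\hat{u}^c_t)^2\,dt\right].
\end{equation*}
This single identity simultaneously yields the minimal-cost formula~\eqref{eq:minimalcosts} (take $u=\hat{u}^c$, so the last term vanishes and one checks $\hat{u}^c \in \cU^c$ under~\eqref{ass:integrability}), the optimality of the feedback law (the last term is nonnegative and zero exactly when $u=\hat{u}^c$), and the equivalence $\cU^c \neq \varnothing \Leftrightarrow$~\eqref{ass:integrability} (the three leading terms are finite iff~\eqref{ass:integrability} holds, and $\hat{u}^c$ then has finite $J^c$).

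\medskip

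\noindent\textbf{The hard part} will be controlling the local-martingale terms and the boundary term at $T$ rigorously. I expect to localize with a sequence of stopping times to handle the local martingales, and the genuine obstacle is justifying that the martingale terms have vanishing expectation in the limit and that no mass escapes through the boundary term $c_\tau(X^u_\tau-\hat{\xi}^c_\tau)^2$ as $\tau \uparrow T$. This is precisely where the integrability condition~\eqref{eq:BSRDEintcond2} and the supermartingale property of $L$ from Lemma~\ref{lem:L} must be invoked, together with~\eqref{ass:Xi} and~\eqref{eq:condxi}, to guarantee uniform integrability. A secondary technical point is verifying that the feedback SDE~\eqref{eq:optimalcontrol} for $\hat{X}^c$ admits a well-defined solution in $\cU$; this follows by solving the linear SDE $d\hat{X}^c_t = \frac{c_t}{\kappa_t}(\hat{\xi}^c_t - \hat{X}^c_t)\,dt$ explicitly and checking the integrability~\eqref{eq:setU} using~\eqref{eq:condkappanu} and the properties of $\hat{\xi}^c$.
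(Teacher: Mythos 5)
Your proposal follows essentially the same route as the paper: the completion-of-squares identity for $c_t(X^u_t-\hat\xi^c_t)^2$ obtained from the It\^o dynamics of $\hat\xi^c$ (this is the paper's Lemma~\ref{lem:mastereq}, derived via the dynamics of $L$, $\hat\xi$, $c\hat\xi$ and $c\hat\xi^2$), followed by localization, optional sampling and Fatou/monotone convergence to pass to the limit $\tau\uparrow T$. The only point worth flagging is that your displayed formula for $J^c(u)$ is asserted as an exact equality while $M(u)$ may be a strict local martingale; the paper proves only the two one-sided bounds it needs ($J^c(u)\ge v$ for every admissible $u$ via a localizing sequence, and $J^c(\hat u^c)\le v$ via Fatou), and your equality, though true, additionally requires observing that $M(u)$ is a supermartingale once it is bounded below by an integrable random variable -- a point the paper makes only in Remark~\ref{rem:timeconsistent}.
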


The proof of Theorem \ref{thm:main} is deferred to Section
\ref{sec:proofs} below. Observe that the feedback law of the optimal
control in \eqref{eq:optimalcontrol} prescribes a reversion towards
the optimal signal process $\hat{\xi}^c_t$ rather than towards the
current target position~$\xi_t$. The reversion speed is controlled by
the ratio $c_t/\kappa_t$. In particular, on the ``blow-up'' set
$\{ \eta = + \infty \}$ the optimizer reverts with stronger and
stronger urgency towards the optimal signal $\hat{\xi}^c$ and hence to
the ultimate target position $\Xi_T$ due to \eqref{eq:limitos}. This
result generalizes the insights from the constant coefficient case
with almost sure terminal state constraint which are presented in
\citet{BankSonerVoss:16}.

Under the integrability conditions \eqref{ass:integrability}, the
optimal costs $J^c(\hat{u}^c)$ in \eqref{eq:minimalcosts} of the
optimizer $\hat{u}^c$ in \eqref{eq:optimalcontrol} are obviously
finite. Actually, they nicely separate into three intuitively
appealing terms making transparent how the regularity and
predictability of the targets $\xi$ and $\Xi_T$ determine the
auxiliary problem's optimal value. The first term represents the costs
due to a possibly suboptimal initial position $x$. The second term
shows how the regularity of the target process~$\xi$ feeds into the
overall costs: Targets which are poorly approximated by the optimal
signal process $\hat{\xi}^c$ in the $L^2(\PP \otimes \nu_t dt)$-sense
produce higher costs. Finally, the third term reveals the importance
of the optimal signal's quadratic variation process
$[\hat{\xi}^c]$. Referring to the definition of $\hat{\xi}^c$ in
\eqref{eq:defoptsignal} (cf. also the representation in
\eqref{eq:repxi} below), the quadratic variation~$[\hat{\xi}^c]$ can
be viewed as a measure for the strength of the fluctuations in the
assessment of the average future target positions of $\xi$, the
terminal position $\Xi_T$ and the random variable $L_T$ which involves
the outcome of the penalization parameter $\eta$ at time $T$. In this
sense, the second integrability condition in~\eqref{ass:integrability}
can be interpreted as encoding a condition on the predictability of
the final stochastic target position $\Xi_T$ as well as the random
penalization parameter $\eta$. Loosely speaking, it ensures that the
outcome of the final position $\Xi_T$ as well as the ``blow-up'' event
$\{ \eta = + \infty \}$ on which $\Xi_T$ has to be matched by controls
in $\cU^c$ are not allowed to come as ``too big a surprise'' at final
time $T$; see also our discussion in Section~\ref{subsec:BSV} below.

\begin{Remark}[Interpretation of the optimal signal] \label{rem:osp}
  Let us present a way to interpret our optimal signal process
  $\hat{\xi}$ defined in~\eqref{eq:defoptsignal}. For ease of
  presentation and to avoid unnecessary technicalities, let us assume
  here that the convergence in~\eqref{eq:defLlimit} also holds in
  $L^1(\PP)$, that $\EE[L_T] > 0$ and that
  $0 < \nu \in L^1(\PP \otimes dt)$ (these assumptions merely simplify
  the justification of the representation in~\eqref{eq:kernel} below;
  cf. Lemma~\ref{lem:remark} in Section~\ref{sec:proofs}). Then, by
  defining the \emph{weight process} $(w_t)_{0 \leq t < T}$ via
    \begin{equation} \label{eq:weight} 
      w_t \set \frac{\EE[L_T \vert
        \cF_t]}{L_t} \qquad (0 \leq t < T)
    \end{equation}
    as well as the measure $\QQ \ll \PP$ on $(\Omega,\cF_T)$ via
    \begin{equation*} \label{eq:density} \frac{d\QQ}{d\PP} \set
      \frac{L_T}{\EE[L_T]},
    \end{equation*}
    we may write
    \begin{align}
      \hat{\xi}_t & = \frac{1}{L_t}
                    \EE\left[ \Xi_T L_T + \int_t^T \xi_r e^{-\int_0^r
        \frac{c_u}{\kappa_u} du} \nu_r dr \, \bigg\vert \, \cF_t
    \right] \nonumber \\
                  & = w_t \,
                    \EE_\QQ[\Xi_T \vert \cF_t] + (1-w_t) \, \EE \left[ \int_t^T \xi_r
                    \frac{e^{-\int_t^r \frac{c_u}{\kappa_u} du}}{(1-w_t)c_t}
                    \nu_r dr \bigg\vert \mathcal{F}_t \right] \label{eq:repxi} 
    \end{align}
    for all $0 \leq t < T$. Recall that the process
    $(L_t)_{0 \leq t < T}$ is a strictly positive supermartingale by
    virtue of Lemma \ref{lem:L}. Consequently, the weight process
    satisfies
    \begin{equation*}
      0 \leq w_t < 1 \quad \PP\text{-a.s. for all } 0 \leq t < T, 
    \end{equation*}
    where the strict inequality follows from Lemma~\ref{lem:remark}
    below because we assumed $\nu > 0$ here for simplicity. Moreover,
    the same lemma gives the identity
    \begin{equation}
      \EE \left[ \int_t^T \frac{e^{-\int_t^r \frac{c_u}{\kappa_u}du}}{(1-w_t)c_t}
        \nu_r dr \bigg\vert \mathcal{F}_t \right] = 1 \quad d\PP
      \otimes dt\text{-a.e. on } \Omega \times [0,T). \label{eq:kernel}
    \end{equation}
    That is, loosely speaking, the optimal signal process $\hat{\xi}$
    in \eqref{eq:repxi} is a convex combination of a weighted average
    of expected future target positions of $\xi$ and the expected
    terminal position~$\Xi_T$, computed under the auxiliary measure
    $\QQ$. The weight shifts gradually towards the ultimate target
    position $\Xi_T$ as $t \uparrow T$, provided that $L_T >
    0$. Indeed, by definition of the weight process
    in~\eqref{eq:weight}, martingale convergence theorem and the
    convergence of the process $L$ in Lemma \ref{lem:L}, we have
    \begin{equation*}
      \exists \, \lim_{t \uparrow T} w_t = 1 \quad \text{on the set }
      \{ L_T > 0 \}.
    \end{equation*}
\end{Remark}


\section{Discussion and illustration}
\label{sec:illustration}

Let us return to the initial stochastic LQ
problem~\eqref{eq:originalLQP} with target
functional~\eqref{eq:originalobjective} and stochastic terminal state
constraint~\eqref{eq:terminalconstraint} and discuss how it relates to our
auxiliary LQ problem~\eqref{eq:problem}. Observe that, for the latter,
we tackle and resolve the delicate partial terminal state constraint
$X^u_T = \Xi_T$ on $\{ \eta = + \infty \}$ incorporated in the set of
admissible policies $\cU^\Xi$ in~\eqref{def:originalpolicies} by
performing a \emph{truncation in time} in the auxiliary objective
functional $J^c$ in~\eqref{eq:defobjective}. Specifically, we replace
the original target functional $J^\eta$ of
problem~\eqref{eq:originalLQP} by a properly chosen limit of
stochastic LQ target functionals with strictly shorter time horizon
$\tau < T$ at which we impose a finite terminal penalization term
$c_{\tau} (X^u_\tau - \hat{\xi}^c_\tau)^2$. In fact, the optimal
signal process $\hat{\xi}^c$ turns out to be the proper key ingredient
for choosing these penalizations in a \emph{time consistent} manner;
see Remark~\ref{rem:timeconsistent} below. Moreover, in light of
$\liminf_{t \uparrow T} c_t \geq \eta$ in~\eqref{eq:BSRDEtc} and
$\lim_{t \uparrow T} \hat{\xi}^c_t = \Xi_T$ on
$\{0 < \eta \leq +\infty\}$ in \eqref{eq:limitos}, for any $\tau < T$
the penalty $c_{\tau} (X^u_\tau - \hat{\xi}^c_\tau)^2$ can be viewed
as a proxy of the terminal penalty
$\eta 1_{\{0 \leq \eta < +\infty \}} (X^u_T - \Xi_T)^2$ in
$J^\eta$. Indeed, appealing to Fatou's Lemma, monotone convergence as
well as \eqref{eq:BSRDEtc} and \eqref{eq:limitos}, we readily obtain
the following:

\begin{Lemma} \label{lem:dominate} It holds that $J^c(u) \geq J^\eta(u)$
  for all $u \in \cU^c$ and all processes $c$
  satisfying~\eqref{eq:BSRDE}, \eqref{eq:BSRDEtc} and~\eqref{ass:Xi}.
  In particular, we have
  \begin{equation*}
    X^u_T = \Xi_T \quad \text{ on the set } \{ \eta = + \infty \}
    \text{ for all } u \in \cU^c,
  \end{equation*}
  that is, $\cU^c \subseteq \cU^{\Xi}$. \qed
\end{Lemma}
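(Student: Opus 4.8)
The plan is to bound $J^c(u)$ from below by evaluating the defining $\limsup$ in~\eqref{eq:defobjective} along one conveniently chosen sequence of times. Fix $u \in \cU^c$ and take the deterministic sequence $\tau^n \set T - 1/n \uparrow T$. Since the $\limsup$ in~\eqref{eq:defobjective} ranges over all sequences of stopping times approaching $T$ strictly from below, $J^c(u)$ dominates the value along this particular one, so that
\[
J^c(u) \geq \limsup_{n \to \infty} \left( a_n + b_n + d_n \right), \qquad
\begin{array}{l}
a_n \set \EE\!\left[ \int_0^{\tau^n} (X^u_t - \xi_t)^2 \nu_t \, dt \right], \\[2pt]
b_n \set \EE\!\left[ \int_0^{\tau^n} \kappa_t u_t^2 \, dt \right], \\[2pt]
d_n \set \EE\!\left[ c_{\tau^n} (X^u_{\tau^n} - \hat{\xi}^c_{\tau^n})^2 \right].
\end{array}
\]
I would then treat the three terms separately. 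The first two integrands are nonnegative and the domains $[0,\tau^n]$ increase to $[0,T)$, so monotone convergence gives $a_n \uparrow A \set \EE[\int_0^T (X^u_t-\xi_t)^2\nu_t\,dt]$ and $b_n \uparrow B \set \EE[\int_0^T \kappa_t u_t^2\,dt]$; since $u \in \cU^c$ forces $J^c(u)<\infty$ and all terms are nonnegative, $A + B = \lim_n (a_n+b_n) \leq \limsup_n(a_n+b_n+d_n) \leq J^c(u) < \infty$, so both limits are finite.

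The crux is the terminal penalty $d_n$, which I would handle by Fatou's lemma after a pointwise lower bound on the $\liminf$ of its integrand. Here $X^u$ is absolutely continuous, hence $X^u_{\tau^n} \to X^u_T$, and I distinguish three sets. On $\{0<\eta<\infty\}$ we have $\hat{\xi}^c_{\tau^n} \to \Xi_T$ by~\eqref{eq:limitos} and $\liminf_n c_{\tau^n} \geq \liminf_{t\uparrow T} c_t \geq \eta$ by~\eqref{eq:BSRDEtc}, whence $\liminf_n c_{\tau^n}(X^u_{\tau^n}-\hat{\xi}^c_{\tau^n})^2 \geq \eta(X^u_T-\Xi_T)^2$. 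On $\{\eta=0\}$ the integrand is nonnegative by positivity of $c$ (Lemma~\ref{lem:cpositivity}), matching the vanishing contribution of $\eta 1_{\{0\leq\eta<+\infty\}}(X^u_T-\Xi_T)^2$ there. On $\{\eta=+\infty\} \subset \{\eta>0\} \subset \{L_T>0\}$, convergence $\hat{\xi}^c_{\tau^n}\to\Xi_T$ still holds by~\eqref{eq:limitos} while $c_{\tau^n}\to+\infty$; the integrand is again nonnegative and the prefactor $\eta 1_{\{0\leq\eta<+\infty\}}$ vanishes. Combining the three cases gives the pointwise inequality $\liminf_n c_{\tau^n}(X^u_{\tau^n}-\hat{\xi}^c_{\tau^n})^2 \geq \eta 1_{\{0\leq\eta<+\infty\}}(X^u_T-\Xi_T)^2$, and Fatou yields $\liminf_n d_n \geq D$ with $D \set \EE[\eta 1_{\{0\leq\eta<+\infty\}}(X^u_T-\Xi_T)^2]$.

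Putting the pieces together, since $A$ and $B$ are genuine finite limits, $\limsup_n(a_n+b_n+d_n) = A+B+\limsup_n d_n \geq A+B+\liminf_n d_n \geq A+B+D = J^\eta(u)$, which is the asserted bound, recalling~\eqref{eq:originalobjective}. For the ``in particular'' statement I would argue by contradiction: if $\PP[X^u_T \neq \Xi_T,\, \eta=+\infty]>0$, then on that event $(X^u_{\tau^n}-\hat{\xi}^c_{\tau^n})^2\to(X^u_T-\Xi_T)^2>0$ while $c_{\tau^n}\to+\infty$, so the integrand of $d_n$ tends to $+\infty$ there; Fatou then forces $\liminf_n d_n = +\infty$ and hence $J^c(u)=+\infty$, contradicting $u\in\cU^c$. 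Thus $X^u_T=\Xi_T$ a.s.\ on $\{\eta=+\infty\}$, i.e.\ $\cU^c\subseteq\cU^\Xi$. The main obstacle I anticipate is exactly the penalty term: one must combine the mere \emph{inequality} $\liminf_t c_t\geq\eta$ of~\eqref{eq:BSRDEtc} with the genuine convergence of the squared factor, and carefully separate the three regimes of $\eta$ so that the possibly degenerate behaviour of $\hat{\xi}^c$ on $\{L_T=0\}\subset\{\eta=0\}$, where~\eqref{eq:limitos} may fail, causes no harm.
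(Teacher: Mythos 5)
Your argument is correct and is precisely the one the paper intends: the lemma is stated with only the hint ``appealing to Fatou's Lemma, monotone convergence as well as~\eqref{eq:BSRDEtc} and~\eqref{eq:limitos}'', and you fill in exactly those steps, including the careful case split on $\{0<\eta<\infty\}$, $\{\eta=0\}$ and $\{\eta=+\infty\}$ that makes the possible non-convergence of $\hat{\xi}^c$ on $\{L_T=0\}$ harmless and derives the constraint $X^u_T=\Xi_T$ on $\{\eta=+\infty\}$ from the blow-up of $c$. No gaps.
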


In light of this lemma, it appears very natural for our auxiliary LQ
problem in~\eqref{eq:problem} to consider parameter processes $c$
which are \emph{minimal} supersolutions. In fact, this motivates our
conjecture~\eqref{eq:conjecture} that
\begin{equation*}
  \argmin_{\cU^{\Xi}} J^\eta = \argmin_{\cU^{c^{\min}}} J^{c^{\min}}
\end{equation*}
holds true for the minimal supersolution $c^{\min}$ to the BSRDE
in~\eqref{eq:BSRDE} with terminal condition~\eqref{eq:BSRDEtc}. In the
following paragraphs of this section, we provide evidence for the
validity of this conjecture. Specifically, we will show how our
approach via the auxiliary LQ problem in~\eqref{eq:problem} with
$c^{\min}$ allows us to recover existing results in the literature to
specific variants of the stochastic LQ problem with stochastic
terminal state constraint posed in~\eqref{eq:originalLQP}. We will
also discuss possible approaches to prove the
conjecture~\eqref{eq:conjecture} based on the insights from
Section~\ref{subsec:mainresult}; see the end of
Section~\ref{subsec:KP} and also Remark~\ref{rem:timeconsistent} in
Section~\ref{sec:proofs}.

\subsection{Bounded coefficients}
\label{subsec:KT}

In case where $\eta$ is bounded along with the processes
$(\nu_t)_{0 \leq t \leq T}$, $(\kappa_t)_{0 \leq t \leq T}$ and
$(\xi)_{0 \leq t \leq T}$, our conjecture in~\eqref{eq:conjecture}
holds true. Indeed, under this conditions and within a Brownian
framework,~\citet{KohlmannTang:02} provide existence and uniqueness of
a (minimal supersolution) $c^{\min}$ to the stochastic Riccati
equation~\eqref{eq:BSRDE} such that
$\lim_{t \uparrow T} c^{\min}_t = \eta$ holds true $\PP$-a.s. They
show that the optimal control $\hat{u}^{c^{\min}}$
in~\eqref{eq:optimalcontrol} from our Theorem~\ref{thm:main} solves
the LQ problem in~\eqref{eq:originalLQP} with objective functional
$J^\eta$ (over the set of unconstrained policies~$\cU^{\Xi}$, recall
Remark~\ref{rem:intro}, 1.)); see~\citet{KohlmannTang:02}, Theorem
5.2. Obviously, our necessary and sufficient integrability conditions
stated in~\eqref{ass:integrability} are satisfied in this case. Note,
though, that in~\cite{KohlmannTang:02}, Section~5.1, the optimal
control $\hat{u}^{c^{\min}}$ is characterized in terms of both the
process $c^{\min}$ and the solution process $b$ to the linear BSDE
\begin{equation} \label{eq:linBSDEKT} 
  db_t = \left(
    \frac{c^{\min}_t}{\kappa_t} b_t - \nu_t \xi_t \right) dt + dM_t \quad \text{on }
  [0,T] \text{ with } b_T = \eta \Xi_T,
\end{equation}
with some c\`adl\`ag (local) martingale $(M_t)_{0 \leq t \leq
  T}$. More precisely, the optimal control is
  described by the feedback law
\begin{equation*}
\hat{u}^{c^{\min}}_t = -\frac{1}{\kappa_t} \left( c^{\min}_t
    \hat{X}^{c^{\min}}_t - b_t \right) = \frac{c^{\min}_t}{\kappa_t}
  \left( \frac{b_t}{c^{\min}_t}-
    \hat{X}^{c^{\min}}_t  \right) \quad (0 \leq t \leq T).
\end{equation*}
That is, in that setting without terminal constraints, our signal
process $\hat{\xi}^{c^{\min}}$ of~\eqref{eq:defoptsignal} coincides
with the ratio $b/c^{\min}$ and so the solution $b$ to the linear BSDE
is an equivalent substitute for this signal process. In contrast, in
case where $\{\eta = +\infty, \Xi_T \not=0\}$ has positive
probability, the terminal condition $b_T=\eta \Xi_T$ becomes
problematic in the sense that the linear BSDE loses all information on
$\Xi_T$ except its sign. Our signal process $\hat{\xi}^{c^{\min}}$,
however, still makes sense in this rather natural case. Note that
$c^{\min}\hat{\xi}^{c^{\min}}$ still satisfies the linear BSDE
dynamics in~\eqref{eq:linBSDEKT} on $[0,T)$ (see
equation~\eqref{eq:dymcxi3} below) but this product may not have a
sensible terminal value on $\{\eta=0\}
\cup\{\eta=+\infty\}$. Fortunately, as our analysis shows the optimal
signal process always makes sense when needed. In particular its
possible lack of a terminal value on $\{\eta=0\}$ is without harm for
our approach to the optimization problem. It thus can be viewed as a
convenient substitute for the no longer operative linear BSDE above.

\subsection{Constant coefficients}
\label{subsec:BSV}

In case of constant coefficients $\nu_t \equiv \nu \in \RR_+$,
$\kappa_t \equiv \kappa \in \RR_+$ and $\eta \in [0,+\infty]$ the
stochastic Riccati differential equation in~\eqref{eq:BSRDE} boils
down to a deterministic \emph{ordinary Riccati differential equation}
on $[0,T]$ of the form
\begin{equation*}
  c'_t = \frac{c^2_t}{\kappa} - \nu\quad
  \text{ subject to }
  c_T = \eta 
\end{equation*}
with explicitly available deterministic (minimal super-)solutions
\begin{equation} \label{eq:solRODE}
  c^{\min}_t =
  \begin{cases}
    \sqrt{\nu \kappa} \, \frac{\sqrt{\nu \kappa} \sinh\left(
        \sqrt{\nu/\kappa}\,(T-t) \right) + \eta \cosh\left(
        \sqrt{\nu/\kappa}\,(T-t) \right)}{\eta \sinh\left(
        \sqrt{\nu/\kappa}\,(T-t) \right) + \sqrt{\nu \kappa}
      \cosh\left( \sqrt{\nu/\kappa}\,(T-t) \right)} & 0 \leq \eta
    < + \infty \\
    \sqrt{\nu \kappa} \coth(\sqrt{\nu} (T-t)/\sqrt{\kappa}), & \eta =
    + \infty
  \end{cases},
\end{equation}
for all $0 \leq t < T$. As a consequence, the process $L$ given
in~\eqref{eq:defL} is also just deterministic and the optimal signal
process $\hat{\xi}^{c^{\min}}$ in~\eqref{eq:defoptsignal} can be
computed explicitly (up to the conditional expectation). Again, our
conjecture in~\eqref{eq:conjecture} holds true. Indeed, our optimal
control $\hat{u}^{c^{\min}}$ from~\eqref{eq:optimalcontrol} provided
in Theorem~\ref{thm:main} coincides with the optimal solution of the
stochastic LQ problem in~\eqref{eq:originalLQP} with objective
functional $J^0$ and $J^{\infty}$, respectively, derived
in~\citet{BankSonerVoss:16}, Theorems~3.1 and~3.2. Therein, our first
integrability condition in~\eqref{ass:integrability} is satisfied as
soon as the target process $\xi$ belongs to $L^2(\PP \otimes dt)$ and
$\Xi_T \in L^2(\PP,\cF_{T-})$. The second integrability condition
in~\eqref{ass:integrability} simplifies to a condition on the terminal
position $\Xi_T$ which is equivalent to
\begin{equation*} 
\int_0^T \frac{\EE[(\Xi_T  - \EE[\Xi_T \vert \cF_s])^2]}{(T-s)^2} ds
< \infty;
\end{equation*}
see Remark 2.1 and Lemma 5.4 in \cite{BankSonerVoss:16}. It thus
reveals that the ultimate target position $\Xi_T$ has to become known
``fast enough'' for the optimally controlled process
$\hat{X}^{c^{\min}}$ in order to reach it at terminal time $T$ with
finite expected costs; c.f. also~\citet{AnkirchnerKruse:15} who
confine themselves to stochastic terminal state constraints of the
form $\Xi_T = \int_0^T \lambda_t dt$ for some progressively measurable
and suitably integrable process $(\lambda_t)_{0 \leq t \leq T}$ which
are gradually revealed as $t \uparrow T$. Related results of this
nature are also provided in~\citet{LueYongZhang:12}. 

For the general case with stochastic coefficients
$\nu = (\nu_t)_{0 \leq t \leq T}$,
$\kappa = (\kappa_t)_{0 \leq t \leq T}$ and random
$\eta \in [0,\infty]$, similar effects are to be expected concerning
the final target position $\Xi_T$ \emph{and} the ``blow up'' event
$\{ \eta = + \infty\}$. As, in general, all these coefficients can be
rather intricately intertwined among each other, it seems difficult to
give conditions on these that ensure $\cU^\Xi \neq \varnothing$ and
are more succinct than our conditions in~\eqref{ass:integrability}.

\subsection{Special case: Vanishing targets}
\label{subsec:KP}

In the special case $\xi \equiv \Xi_T \equiv 0$ $\PP$-a.s., where
obviously $\hat{\xi} \equiv 0$ and the integrability conditions
in~\eqref{ass:integrability} hold trivially, our conjecture
in~\eqref{eq:conjecture} holds true as
well. Indeed,~\citet{KrusePopier:16_1} derive under sufficient
integrability conditions on $(\kappa_t)_{0 \leq t \leq T}$ and
$(\nu_t)_{0 \leq t \leq T}$ existence of a minimal supersolution
$c^{\min}$ to the Riccati BSDE in~\eqref{eq:BSRDE} with terminal
condition $\liminf_{t \uparrow T} \geq \eta \in [0,+\infty]$
(recall~\eqref{eq:BSRDEtc}). The minimal supersolution $c^{\min}$ is
constructed via the monotone limit
$c_t^{\min} \set \lim_{n \uparrow \infty} c_t^{(n)}$ for all
$t \in [0,T)$, where $c^{(n)}$ denotes the unique (minimal
super-)solution with Riccati dynamics~\eqref{eq:BSRDE} satisfying the
terminal condition $\lim_{t \uparrow T} c^{(n)}_T = \eta \wedge n$ for
some constant $n > 0$. They show that the optimal control
$\hat{u}^{c^{\min}}$ with state process~\eqref{eq:defX} to the
stochastic LQ problem in~\eqref{eq:originalLQP} with
$\xi \equiv \Xi_T \equiv 0$ is given as in~\eqref{eq:optimalcontrol} of
our Theorem~\ref{thm:main}; see \cite{KrusePopier:16_1},
Theorem~3. That is, since $\hat{\xi}^{c^{\min}} \equiv 0$, the optimal
control with controlled process
$\hat{X}^{c^{\min}} \set X^{\hat{u}^{c^{\min}}}$ is simply given by
\begin{equation} \label{eq:optcontKP} 
  \hat{u}^{c^{\min}}_t =
  -\frac{c^{\min}_t}{\kappa_t} X^{\hat{u}^{c^{\min}}}_t = -\frac{x
    L_t}{\kappa_t} \quad (0 \leq t \leq T)
\end{equation}
and the corresponding optimal costs
in~\eqref{eq:minimalcosts} simplify dramatically to
\begin{equation} \label{eq:optvalKP}
  J^{c^{\min}}(\hat{u}^{c^{\min}}) = c^{\min}_0 x^2.
\end{equation}
In fact, in order to tackle the partial state constraint $\Xi_T = 0$ on the
set $\{ \eta = + \infty \}$,~\citet{KrusePopier:16_1} proceed via a
\emph{truncation in space}. Specifically, they introduce a family of
unconstrained variants of problem~\eqref{eq:originalLQP} (with
$\xi \equiv \Xi_T \equiv 0$) with objective functionals
\begin{equation} \label{eq:objectivetruncspace}
  J^{(n)}(u) \set \EE \left[
    \int_0^T (X^u_t)^2 \nu_t dt + \int_0^T \kappa_t u^2_t dt +
    (\eta \wedge n) (X^u_T)^2 \right],
\end{equation}
where the random penalization parameter $\eta$ is replaced by
truncated versions $\eta \wedge n$. Then the corresponding optimal
controls $\hat{u}^{(n)}_t=-c^{(n)}_t X^{\hat{u}^{(n)}}_t/\kappa_t$ and
the corresponding optimal costs
$J^{(n)}(\hat{u}^{(n)}) = c_0^{(n)}x^2$ clearly satisfy
\begin{equation} \label{eq:KPequality}
\begin{aligned}
  J^{\eta}(\hat{u}^\eta) \leq & \; J^{c^{\min}}(\hat{u}^{c^{\min}}) =
  c^{\min}_0 x^2 = \lim_{n
    \uparrow \infty} c_0^{(n)} x_0^2 \\
= & \; \lim_{n
    \uparrow \infty} J^{(n)} (\hat{u}^{(n)}) = \lim_{n \uparrow \infty}
  J^{c^{(n)}}(\hat{u}^{c^{(n)}}) \leq J^{\eta}(\hat{u}^\eta),
\end{aligned}
\end{equation}
where $\hat{u}^\eta$ denotes the optimizer of
problem~\eqref{eq:originalLQP} (with $\xi \equiv \Xi_T \equiv 0$). It
follows that equality holds everywhere and, by uniqueness of
optimizers, $\hat{u}^\eta = \hat{u}^{c^{\min}}$ as conjectured
  in~\eqref{eq:conjecture}.

For the general case $\xi \neq 0$ and $\Xi_T \neq 0$, one could
likewise introduce as above in~\eqref{eq:objectivetruncspace} a family
of unconstrained variants of problem~\eqref{eq:originalLQP} with
objective functionals
\begin{equation*}
  J^{(n)}(u) = \EE \left[
    \int_0^T (X^u_t - \xi_t)^2 \nu_t dt + \int_0^T \kappa_t u^2_t dt +
    (\eta \wedge n) (X^u_T - \Xi_T)^2 \right].
\end{equation*}
Recall from the discussion in Section~\ref{subsec:KT} that this
stochastic LQ problem is fully characterized by the solution processes
$c^{(n)}$ and $b^{(n)}$ satisfying the Riccati BSDE in
\eqref{eq:BSRDE} and the linear BSDE in~\eqref{eq:linBSDEKT} with
terminal conditions $c^{(n)}_T = \eta \wedge n$ and
$b^{(n)}_T = (\eta \wedge n) \Xi_T$, respectively. In addition, under
sufficient conditions (e.g. boundedness as discussed in
Section~\ref{subsec:KT}) which guarantee~\eqref{ass:integrability} as
well as the convergence
\begin{equation*}
\limsup_{\tau
  \uparrow T} \E\left[ c^{(n)}_{\tau} (X_\tau^{\hat{u}^{c^{(n)}}} -
\xi_{\tau}^{c^{(n)}} )^2 \right] = \E \left[ (\eta \wedge n) (X_T^{\hat{u}^{c^{(n)}}} -
\Xi_{T} )^2 \right],
\end{equation*}
our Theorem~\ref{thm:main} applies in this
context and it holds that
\begin{equation} \label{eq:limitJ}
\begin{aligned}
  J^{(n)}(\hat{u}^{(n)}) = & \; 
  c^{(n)}_0 ( x - \hat{\xi}^{c^{(n)}}_0)^2 \\
  & + \EE \left[ \int_0^T (\xi_t - \hat{\xi}^{c^{(n)}}_t)^2 \nu_t dt \right]
  + \EE \left[ \int_{[0,T)} c_t^{(n)} d[\hat{\xi}^{c^{(n)}}
  ]_t \right] 
\end{aligned}
\end{equation}
for the optimal control $\hat{u}^{(n)}$.  As
in~\citet{KrusePopier:16_1}, one could then try to pass to the limit
$n \uparrow \infty$. However, passing to the limit is not as
straightforward in \eqref{eq:limitJ} as it is in \eqref{eq:KPequality}
where we relied heavily on $\xi \equiv 0$, $\Xi_T \equiv 0$.  Indeed,
for convergence of~\eqref{eq:limitJ}, a suitable convergence of our
signal processes $\hat{\xi}^{c^{(n)}}$ would be required which seems
to be out of reach with the current knowledge of singular BSRDEs and
so a full proof of our conjecture~\eqref{eq:conjecture} by this
approach has to be left for future research.

\section{Proofs}
\label{sec:proofs}

Throughout this section we work under the assumptions of our main
result, Theorem \ref{thm:main}. Its verification relies on a
completion of squares argument similar to \citet{KohlmannTang:02}
(cf. also \citet{YongZhou:99} for this method in solving LQ
problems). The following lemma summarizes the key identity for our
verification and illustrates again the usefulness of our signal
process $\hat{\xi}$.

\begin{Lemma} \label{lem:mastereq} Suppose the assumptions of
  Theorem~\ref{thm:main} hold true. Then for all progressively
  measurable, $\PP$-a.s. locally $L^2([0,T),\kappa_t dt)$-integrable
  processes $u$, the cost process
  \begin{equation*} 
    C_t(u) \set \int_0^t (X^u_s - \xi_s)^2 \nu_s ds 
    + \int_0^t \kappa_s u^2_s ds + c_t
    (X^u_t - \hat{\xi}_t)^2 \quad  (0 \leq t < T)
  \end{equation*}
  is a nonnegative, c\`adl\`ag local submartingale. It allows for the
  decomposition
  \begin{equation} \label{eq:master}
    C_t(u) = c_0(x-\hat{\xi}_0)^2 + A_t(u) + M_t(u) 
    \quad  (0 \leq t < T), 
  \end{equation}
  where
  \begin{align}
    A_t(u) \set 
    & ~\int_0^t (\xi_s - \hat{\xi}_s)^2 \nu_s ds +
      \int_0^t c_s d[\hat{\xi}]_s \nonumber \\
    & + \int_0^t \kappa_s \left( u_s -
      \frac{c_s}{\kappa_s} \left( \hat{\xi}_s - X^u_s \right)
      \right)^2 ds \quad  (0 \leq t < T)\label{eq:A} \\
    \intertext{is a right continuous, nondecreasing, adapted process
    and where}
    M_t(u) \set 
    & ~ \int_0^t (\hat{\xi}^2_{s-} - (X^u_{s-})^2) dN_s +
      2 \int_0^t \frac{c_{s-}}{L_{s-}} (\hat{\xi}_{s-} -
      X^u_{s-}) d\tilde{M}_s \label{eq:M} \\
    \intertext{with}
    \tilde{M}_t \set &~ \EE \left[\Xi_T L_T + \int_0^T \xi_s e^{-\int_0^s
    \frac{c_u}{\kappa_u} du} \nu_s ds \, \bigg\vert\, \cF_t \right] \quad  (0 \leq t < T)
        \label{eq:MtY}
  \end{align}
  is a local martingale on $[0,T)$.
\end{Lemma}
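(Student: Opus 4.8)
The plan is to establish \eqref{eq:master} by a completion-of-squares argument: I would apply the Itô product rule to the only genuinely stochastic term $c_t(X^u_t-\hat{\xi}_t)^2$ of $C_t(u)$ and show that, after regrouping, the $u$-dependent drift collapses into a perfect square. Writing $Y_t:=X^u_t-\hat{\xi}_t$, nonnegativity of $C_t(u)$ is immediate from $c_t>0$ (Lemma~\ref{lem:cpositivity}) together with $\nu_t,\kappa_t\ge 0$, and right-continuity is clear; the local submartingale property then follows from the decomposition once $A(u)$ is shown to be nondecreasing and $M(u)$ a local martingale.

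First I would record the semimartingale dynamics of the signal $\hat{\xi}$. Since $\hat{\xi}_t=P_t/L_t$ with $P_t:=\tilde{M}_t-\int_0^t\xi_s e^{-\int_0^s c_u/\kappa_u du}\nu_s\,ds$ and $L_t=c_t e^{-\int_0^t c_u/\kappa_u du}$ as in \eqref{eq:defL}, I would differentiate using the SDE \eqref{eq:SDEL} for $L$ and the Riccati dynamics \eqref{eq:BSRDE} for $c$, noting that $\tilde{M}$ in \eqref{eq:MtY} is a true martingale thanks to \eqref{eq:condxi} and \eqref{ass:Xi}, and that $c_t/L_t=e^{\int_0^t c_u/\kappa_u du}$ is continuous even though $c$ and $L$ jump. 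This identifies the martingale part of $\hat{\xi}$ as $\frac{1}{L_{t-}}d\tilde{M}_t+\frac{\hat{\xi}_{t-}}{c_{t-}}dN_t$ and its drift as $\frac{\nu_t}{c_t}(\hat{\xi}_t-\xi_t)\,dt$ modulo covariation corrections arising from the quotient.

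Next I would apply Itô's formula to $f(c,Y)=cY^2$, using $dX^u_t=u_t\,dt$ so that $X^u$ is continuous of finite variation and contributes no bracket. Sorting the genuine $dt$-terms, the combination $\kappa_t u_t^2+2c_tY_tu_t+\frac{c_t^2}{\kappa_t}Y_t^2$ completes to $\kappa_t\big(u_t-\frac{c_t}{\kappa_t}(\hat{\xi}_t-X^u_t)\big)^2$, while the remaining running terms reduce, via $(X^u-\xi)^2=(Y+(\hat{\xi}-\xi))^2$, to $(\xi_t-\hat{\xi}_t)^2\nu_t$; together these account for all $dt$-contributions to \eqref{eq:A}. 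The crucial observation is that the covariation term $2Y_t\,d[c,Y]_t=-2Y_t\,d[c,\hat{\xi}]_t$ produced by the Itô formula is cancelled exactly by the covariation corrections carried in the drift of $d\hat{\xi}$, leaving only $c_t\,d[Y]_t=c_t\,d[\hat{\xi}]_t$, the remaining term of \eqref{eq:A}. Collecting the martingale differentials reproduces $(\hat{\xi}_{t-}^2-(X^u_{t-})^2)\,dN_t+2\frac{c_{t-}}{L_{t-}}(\hat{\xi}_{t-}-X^u_{t-})\,d\tilde{M}_t$, i.e.\ \eqref{eq:M}; since the integrands are càglàd and $N,\tilde{M}$ are local martingales, $M(u)$ is a local martingale after a standard localization (which also accommodates the mere local $L^2([0,T),\kappa_t dt)$-integrability of $u$). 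Monotonicity of $A(u)$ then follows from $c_t>0$, giving $c_t\,d[\hat{\xi}]_t\ge 0$, together with the nonnegativity of the other two integrands.

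The main obstacle is the jump bookkeeping. All of the above is transparent in the continuous case, but $c$ and $\hat{\xi}$ genuinely jump through $N$ and $\tilde{M}$, so I must run the full Itô formula with jumps and verify that the jump correction $\Delta(c_tY_t^2)-Y_{t-}^2\Delta c_t-2c_{t-}Y_{t-}\Delta Y_t$ combines with the jump parts of $[c,\hat{\xi}]$ and $[\hat{\xi}]$ and with the jump part of the drift of $\hat{\xi}$ so that, first, the cancellation of the $[c,\hat{\xi}]$-contributions persists, and, second, the increasing process is exactly $\int_0^{\cdot} c_s\,d[\hat{\xi}]_s$ with the full optional quadratic variation. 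Carrying out these jump corrections cleanly, rather than relying on the purely continuous heuristic, is where the real work lies.
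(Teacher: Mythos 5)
Your proposal is correct and follows essentially the same route as the paper: it writes $\hat{\xi}=(\tilde{M}-\int_0^\cdot\xi_r e^{-\int_0^r c_u/\kappa_u du}\nu_r dr)/L$, derives its semimartingale decomposition from \eqref{eq:SDEL} and \eqref{eq:BSRDE}, applies It\^o/integration by parts to $c_t(X^u_t-\hat{\xi}_t)^2$, completes the square in $u$, and relies on the cancellation of the $[c,\hat{\xi}]$-covariation against the correction terms in the drift of $\hat{\xi}$ — exactly the mechanism of Steps 1--4 in the paper, whose bulk is the jump bookkeeping you correctly single out as the remaining work.
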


\begin{proof}
  First, note that by~\eqref{eq:condkappanu} and
  $u \in L^2([0,T),\kappa_t dt)$ locally $\PP$-a.s., the cost process
  $(C_t(u))_{0 \leq t < T}$ in~\eqref{eq:master} is well defined along
  with $X^u$. Let us expand
  \begin{equation*}
    c_t ( X^u_t - \hat{\xi}_t)^2 = c_t (X^u_t)^2 - 2 X^u_t c_t
    \hat{\xi}_t + c_t \hat{\xi}^2_t \quad (0 \leq t < T)
  \end{equation*}
  and then apply It\^o's formula to each of the resulting three
  terms. This will be prepared by computing the dynamics of the
  processes $\hat{\xi}$, $c\hat{\xi}$ and $c\hat{\xi}^2$,
  respectively, in the following steps 1, 2 and 3. In step 4 we put
  everything together and derive our main identity \eqref{eq:master}.

  \emph{Step 1:} We start with computing the dynamics of our optimal
  signal process $(\hat{\xi}_t)_{0 \leq t < T}$ defined in
  \eqref{eq:defoptsignal}. For ease of notation, let us define the process
  \begin{equation*}
    Y_t  \set \int_0^t \xi_r e^{-\int_0^r \frac{c_u}{\kappa_u} du} \nu_r
    dr \quad (0 \leq t \leq T).
  \end{equation*}
  Observe that $Y_T \in L^1(\PP)$ due to \eqref{eq:condxi}.  Moreover,
  recall that $\Xi_T L_T \in L^1(\cF_{T-},\PP)$ by \eqref{ass:Xi} so
  that \eqref{eq:MtY} defines a c\`adl\`ag martingale on
  $[0,T]$. By the definition of $\hat{\xi}$ in
  \eqref{eq:defoptsignal}, we can now express $\hat{\xi}$ in terms of
  $Y$ and $\tilde{M}$ via
  \begin{equation}
    \hat{\xi}_t =  \frac{1}{L_t}
                    \EE\left[ \Xi_T L_T + \int_t^T \xi_r
                      e^{-\int_0^r \frac{c_u}{\kappa_u} du} \nu_r dr \, \bigg\vert \, \cF_t
                    \right]
    = \frac{1}{L_t} \left( \tilde{M}_t - Y_t \right) \label{eq:optsignal}
  \end{equation}
  for all $0 \leq t < T$. Next, recall the dynamics of $L$ on $[0,T)$
  in \eqref{eq:SDEL} and note that
  \begin{equation}
    \Delta L_t = - \frac{L_{t-}}{c_{t-}} \Delta N_t \quad \text{and} \quad
    [L]^c_t = \int_0^t \frac{L^2_{s-}}{c^2_{s-}} d[N]^c_s, \label{eq:jumpL}
  \end{equation}
  where $[L]^c$ and $[N]^c$ denote the path-by-path continuous parts
  of the quadratic variations of $[L]$ and $[N]$, respectively (cf.,
  e.g., \citet{Prot:04}, Chapter II.6, for more details). Hence,
  applying It\^o's formula as in, e.g., \cite{Prot:04}, Theorem II.32,
  we obtain
  \begin{align}
    \frac{1}{L}_t = & ~\frac{1}{L_0} - \int_0^t \frac{1}{L^2_{s-}} dL_s
                      + \int_0^t \frac{1}{L^3_{s-}} d[L]_s^c \nonumber \\
                    & + \sum_{s \leq t} \left( \frac{1}{L_s} - \frac{1}{L_{s-}} +
                      \frac{1}{L^2_{s-}} \Delta L_s \right). \label{eq:dyminvL1}
  \end{align}
  Using \eqref{eq:jumpL}, the summands in
  the sum in \eqref{eq:dyminvL1} above can be written as
  \begin{equation*}
    \frac{L_{s-} - L_s}{L_s L_{s-}} - \frac{\Delta
      N_s}{L_{s-}c_{s-}} = \frac{\Delta N_s}{c_{s-}}
    \frac{L_{s-} - L_s}{L_s L_{s-}} = \frac{(\Delta
      N_s)^2}{L_s c_{s-}^2} = \frac{(\Delta
      N_s)^2}{L_{s-} c_{s-} c_s},
  \end{equation*} 
  where we also used $\Delta c_s = - \Delta N_s$ and thus the identity
  $1/L_s = c_{s-}/(L_{s-} c_s)$ in the last equality. Hence,
  together with the dynamics of $L$ in \eqref{eq:SDEL} and $[L]^c$ in
  \eqref{eq:jumpL} we can rewrite \eqref{eq:dyminvL1} as
  \begin{align}
    \frac{1}{L}_t = & ~ \frac{1}{L_0} + \int_0^t
                      \frac{\nu_s}{L_{s-} c_{s-}} ds
                      + \int_0^t \frac{1}{L_{s-} c_{s-}} dN_s \nonumber \\
                    & +
                      \int_0^t \frac{1}{L_{s-} c^2_{s-}} d[N]^c_s + \sum_{s \leq t} \frac{(\Delta
      N_s)^2}{L_{s-} c_{s-} c_s}. \label{eq:dyminvL2}
  \end{align}
  Now, integrating by parts in \eqref{eq:optsignal} and then using the
  dynamics of $1/L$ in \eqref{eq:dyminvL2} gives us
  \begin{align}
    \hat{\xi}_t 
    = & ~ \hat{\xi}_0 + \int_0^t \frac{1}{L_{s-}}
        (d\tilde{M}_s - dY_s) + \int_0^t \hat{\xi}_{s-} L_{s-}
        d\left( \frac{1}{L_s} \right) + \left[ \frac{1}{L},
        \tilde{M} \right]_t \nonumber \\
    = & ~ \hat{\xi}_0 - \int_0^t
        (\xi_s -\hat{\xi}_{s-}) \frac{\nu_s}{c_{s-}} ds + \int_0^t
        \frac{1}{L_{s-}} d\tilde{M}_s + \int_0^t
        \frac{\hat{\xi}_{s-}}{c_{s-}} dN_s \nonumber \\
      & + \int_0^t
        \frac{\hat{\xi}_{s-}}{c^2_{s-}} d[N]^c_s +
        \sum_{s \leq t} \frac{\hat{\xi}_{s-}}{c_{s-} c_s } (\Delta
        N_s)^2 
        + \left[ \frac{1}{L},
        \tilde{M} \right]_t, \label{eq:dymxi1}
  \end{align}
  where the quadratic covariation can be computed as
  \begin{align}
    \left[ \frac{1}{L},
    \tilde{M} \right]_t = 
    & 
      ~\int_0^t \frac{1}{L_{s-}
      c_{s-}} d
      [\tilde{M},N]^c_s \nonumber \\
    & ~+
      \sum_{s \leq t} \left( \frac{\Delta \tilde{M}_s
      \Delta N_s}{L_{s-} c_{s-}}
      + \frac{(\Delta N_s)^2 \Delta \tilde{M}_s}{L_{s-} c_{s-} c_s }
      \right). \label{eq:dymxi2}
  \end{align}
  Collecting all the sums in \eqref{eq:dymxi1} together with those in
  \eqref{eq:dymxi2} yields
  \begin{align}
    & \sum_{s \leq t} \frac{\Delta N_s}{L_{s-} c_{s-} c_s} \left( c_s
      \Delta \tilde{M}_s + \Delta N_s \Delta \tilde{M}_s +
      \hat{\xi}_{s-} L_{s-} \Delta N_s \right) \nonumber \\
    & = \sum_{s \leq t} \frac{\Delta N_s}{L_{s-} c_{s-} c_s} \left(
      \hat{\xi}_s L_s c_{s-} - \hat{\xi}_{s-} L_{s-} c_s \right), \label{eq:dymxi3}
  \end{align}
  where we used the fact that $\Delta N_s = -\Delta c_s$ as well as
  \begin{equation}
    \Delta \tilde{M}_s = \tilde{M}_s - \tilde{M}_{s-} = \hat{\xi}_s L_s -
    \hat{\xi}_{s-} L_{s-} \label{eq:jumpMtilde}
  \end{equation}
  due to the representation of $\hat{\xi}$ in \eqref{eq:optsignal} and
  the continuity of $Y$. Plugging back \eqref{eq:dymxi3} into
  \eqref{eq:dymxi1} finally gives us
  \begin{align}
    \hat{\xi}_t = 
    & ~ \hat{\xi}_0 - \int_0^t
      (\xi_s -\hat{\xi}_{s-}) \frac{\nu_s}{c_{s-}} ds + \int_0^t
      \frac{1}{L_{s-}} d\tilde{M}_s + \int_0^t
      \frac{\hat{\xi}_{s-}}{c_{s-}} dN_s \nonumber \\
    & + \int_0^t
      \frac{\hat{\xi}_{s-}}{c^2_{s-}} d[N]^c_s 
      + \int_0^t \frac{1}{L_{s-}
      c_{s-}} d[\tilde{M},N]^c_s \nonumber \\
    & + \sum_{s \leq t} \frac{\Delta N_s}{L_{s-} c_{s-} c_s} \left(
      \hat{\xi}_s L_s c_{s-} - \hat{\xi}_{s-} L_{s-} c_s \right). \label{eq:dymxi4}
  \end{align}

  \emph{Step 2:} Let us now compute the dynamics of
  $c\hat{\xi}$. Again, integration by parts, together with the
  dynamics of $\hat{\xi}$ in \eqref{eq:dymxi4}, yields
  \begin{align}
    c_t \hat{\xi}_t = 
    & ~ c_0 \hat{\xi}_0 + \int_0^t c_{s-} d\hat{\xi}_s
      + \int_0^t \hat{\xi}_{s-} dc_s + \left[ c,
      \hat{\xi} \right]_t \nonumber \\
    = & ~ c_0 \hat{\xi}_0 - \int_0^t \xi_s \nu_s ds + \int_0^t
        \hat{\xi}_{s-} \frac{c_s^2}{\kappa_s} ds + \int_0^t
        \frac{c_{s-}}{L_{s-}} d\tilde{M}_s \nonumber \\
    & + \int_0^t \frac{\hat{\xi}_{s-}}{c_{s-}} d[N]^c_s +
      \int_0^t \frac{1}{L_{s-}} d[\tilde{M}, N]^c_s
      \nonumber \\
    & + \sum_{s \leq t} \frac{\Delta N_s}{L_{s-} c_s} \left(
      \hat{\xi}_s L_s c_{s-} - \hat{\xi}_{s-} L_{s-} c_s
      \right) + \left[ c,
      \hat{\xi} \right]_t. \label{eq:dymcxi1}
  \end{align}
  The quadratic covariation in \eqref{eq:dymcxi1} can be computed as
  \begin{align}
    \left[ c, \hat{\xi} \right]_t = 
    & - \int_0^t \frac{1}{L_{s-}} d[\tilde{M}, N]^c_s 
      - \int_0^t \frac{\hat{\xi}_{s-}}{c_{s-}} d[N]^c_s 
      \nonumber \\
    & - \sum_{s \leq t} \frac{\Delta N_s \Delta \tilde{M}_s}{L_{s-}}
      - \sum_{s \leq t} \frac{\hat{\xi}_{s-} (\Delta N_s)^2}{c_{s-}} \nonumber
    \\
    & - \sum_{s \leq t} \frac{(\Delta N_s)^2}{L_{s-} c_{s-} c_s} \left(
    \hat{\xi}_s L_s c_{s-} - \hat{\xi}_{s-} L_{s-} c_s \right). \label{eq:dymcxi2}
  \end{align}
  The sums of the jumps in the quadratic covariation in
  \eqref{eq:dymcxi2} can be rewritten (using again the identity in
  \eqref{eq:jumpMtilde} as well as the fact that $\Delta c_s = -
  \Delta N_s$) as
  \begin{align*}
    & - \sum_{s \leq t} \frac{\Delta N_s}{L_{s-} c_{s-} c_s} \left(
      \Delta \tilde{M}_s c_s c_{s-} + \hat{\xi}_{s-} \Delta N_s L_{s-}
      c_s + \Delta N_s
      \hat{\xi}_s L_s c_{s-} - \Delta N_s \hat{\xi}_{s-} L_{s-} c_s \right) \\
    & = - \sum_{s \leq t} \frac{\Delta N_s}{L_{s-} c_s} \left(
      \hat{\xi}_s L_s c_{s-} - \hat{\xi}_{s-} L_{s-} c_s \right).
  \end{align*}
  With this observation, plugging back the quadratic covariation in
  \eqref{eq:dymcxi2} into \eqref{eq:dymcxi1}, we simply get
  \begin{equation}
    c_t \hat{\xi}_t = c_0 \hat{\xi}_0 - \int_0^t \xi_s \nu_s ds + \int_0^t
    \hat{\xi}_{s-} \frac{c_s^2}{\kappa_s} ds + \int_0^t
    \frac{c_{s-}}{L_{s-}} d\tilde{M}_s. \label{eq:dymcxi3}
  \end{equation}

  \emph{Step 3:} Next, we compute the dynamics of
  $c\hat{\xi}^2$. Application of integration by parts together
  with the dynamics of $\hat{\xi}$ in \eqref{eq:dymxi4} yields
  \begin{align*}
    \hat{\xi}^2_t = & ~\hat{\xi}^2_0 + 2 \int_0^t \hat{\xi}_{s-}
                      d\hat{\xi}_s + [\hat{\xi}]_t \\
    = & ~\hat{\xi}^2_0 - 2 \int_0^t \hat{\xi}_{s-} (\xi_s - \hat{\xi}_{s-})
        \frac{\nu_s}{c_{s-}} ds + 2 \int_0^t
        \frac{\hat{\xi}_{s-}}{L_{s-}} d\tilde{M}_s + 2 \int_0^t
        \frac{\hat{\xi}^2_{s-}}{c_{s-}} dN_s \\
                    & + 2 \int_0^t
                      \frac{\hat{\xi}^2_{s-}}{c^2_{s-}} d[N]^c_s + 2
                      \int_0^t \frac{\hat{\xi}_{s-}}{L_{s-} c_{s-}} d[
                      \tilde{M},N]^c_s  \\
                    & + 2 \sum_{s \leq t} \frac{\hat{\xi}_{s-} \Delta N_s}{L_{s-} c_{s-}
                      c_s} \left( \hat{\xi}_s L_s c_{s-} - \hat{\xi}_{s-} L_{s-} c_s
                      \right) + [\hat{\xi}]_t.
  \end{align*}
  Consequently, using once more integration by parts, we
  obtain
  \begin{align}
    c_t \hat{\xi}^2_t = 
    & ~c_0 \hat{\xi}^2_0 + \int_0^t c_{s-} d\hat{\xi}^2_s
      + \int_0^t \hat{\xi}^2_{s-} dc_s +
      [c,\hat{\xi}^2]_t \nonumber \\
    = & ~c_0 \hat{\xi}^2_0 - 2 \int_0^t \hat{\xi}_{s-} (\xi_s - \hat{\xi}_{s-})
        \nu_s ds + 2 \int_0^t
        \frac{c_{s-} \hat{\xi}_{s-}}{L_{s-}} d\tilde{M}_s + 2 \int_0^t
        \hat{\xi}^2_{s-} dN_s \nonumber \\
    & + 2 \int_0^t
      \frac{\hat{\xi}^2_{s-}}{c_{s-}} d[N]^c_s + 2
      \int_0^t \frac{\hat{\xi}_{s-}}{L_{s-}} d[\tilde{M},N]^c_s  \nonumber \\
    & + 2 \sum_{s \leq t} \frac{\hat{\xi}_{s-} \Delta N_s}{L_{s-}
      c_s} \left( \hat{\xi}_s L_s c_{s-} - \hat{\xi}_{s-} L_{s-} c_s
      \right) + \int_0^t c_{s-} d[\hat{\xi}]_s \nonumber \\
    & + \int_0^t \hat{\xi}^2_{s-} \frac{c_s^2}{\kappa_s} ds - \int_0^t
      \hat{\xi}^2_{s-} \nu_s ds - \int_0^t
      \hat{\xi}^2_{s-} dN_s + [c,\hat{\xi}^2]_t.
                          \label{eq:dymcxisq1}
  \end{align}
  The final quadratic covariation in \eqref{eq:dymcxisq1} can be
  computed as
  \begin{align}
    [c,\hat{\xi}^2]_t = 
    & ~ -2 \int_0^t \frac{\hat{\xi}_{s-}}{L_{s-}} d[\tilde{M},N]^c_s 
      - 2 \sum_{s \leq
      t} \frac{\hat{\xi}_{s-}}{L_{s-}} \Delta
      \tilde{M}_s \Delta N_s
      \nonumber \\
    & - 2 \int_0^t \frac{\hat{\xi}^2_{s-}}{c_{s-}} d[N]^c_s - 2 \sum_{s \leq
      t} \frac{\hat{\xi}^2_{s-}}{c_{s-}} (\Delta
      N_s)^2 \nonumber \\
    & - 2  \sum_{s \leq t}
      \frac{\hat{\xi}_{s-} (\Delta N_s)^2}{L_{s-} c_{s-}
      c_s} \left( \hat{\xi}_s L_s c_{s-} - \hat{\xi}_{s-} L_{s-} c_s
      \right) + \int_0^t \Delta c_s d[\hat{\xi}]_s. \label{eq:dymcxisq2}
  \end{align}
  Observe that the sum of jumps in \eqref{eq:dymcxisq2} can
  be rewritten as
  \begin{align*}
    & -2 \sum_{s \leq t} \frac{\hat{\xi}_{s-} \Delta N_s}{L_{s-} c_{s-} c_s}
      \left( \Delta \tilde{M}_s c_s c_{s-} + \hat{\xi}_{s-} \Delta N_s
      c_{s} L_{s-} \right. \\
    & \hspace{96pt} \left. + \Delta N_s \hat{\xi}_s L_{s} c_{s-} -
      \hat{\xi}_{s-} L_{s-} c_{s} \Delta N_s \right) \\
    & = -2 \sum_{s \leq t} \frac{\hat{\xi}_{s-} \Delta N_s}{L_{s-}
      c_s} \left( \hat{\xi}_s L_s c_{s-} - \hat{\xi}_{s-} L_{s-} c_s
      \right),
  \end{align*}
  where we used once more the identity in \eqref{eq:jumpMtilde} and
  $\Delta c_s = -\Delta N_s$. With this observation, plugging back
  \eqref{eq:dymcxisq2} into \eqref{eq:dymcxisq1}, we finally obtain
  \begin{align}
    c_t \hat{\xi}^2_t
    = & ~c_0 \hat{\xi}^2_0 - 2 \int_0^t \hat{\xi}_{s-} \xi_s
        \nu_s ds + \int_0^t \hat{\xi}^2_{s-}
        \nu_s ds + 2 \int_0^t
        \frac{c_{s-} \hat{\xi}_{s-}}{L_{s-}} d\tilde{M}_s \nonumber \\
      & + \int_0^t
        \hat{\xi}^2_{s-} dN_s + \int_0^t c_s d[\hat{\xi}]_s + \int_0^t
        \hat{\xi}^2_{s-} \frac{c_s^2}{\kappa_s} ds.
                          \label{eq:dymcxisq3}
  \end{align}

  \emph{Step 4:} Let us now put together all the computations from the
  preceding steps. Specifically, let $u$ be a progressively
  measurable, $\PP$-a.s. locally $L^2([0,T),\kappa_t dt)$-integrable
  process with corresponding controlled process $X^u$. Due to our
  computations in \eqref{eq:dymcxi3} and \eqref{eq:dymcxisq3} as well
  as the fact that $X^u$ is continuous and of finite variation, we get
  for all $0 \leq t < T$ that
  \begin{align}
    & c_t ( X^u_t - \hat{\xi}_t)^2 = c_t (X^u_t)^2 - 2 X^u_t c_t
      \hat{\xi}_t + c_t \hat{\xi}^2_t \nonumber \\
    & = c_0(x-\hat{\xi}_0)^2 + \int_0^t c_s d[\hat{\xi}]_s -\int_0^t (X^u_{s})^2 \nu_s ds + 2 \int_0^t X^u_s
      \nu_s \xi_s ds \nonumber \\
    & \hspace{12pt} - 2 \int_0^t
      c_{s} u_s (\hat{\xi}_{s} - X^u_s) ds + \int_0^t
      \frac{c_s^2}{\kappa_s} (X^u_{s} - \hat{\xi}_{s})^2 ds - 2 \int_0^t \hat{\xi}_s \xi_s \nu_s ds +
      \int_0^t \hat{\xi}^2_s \nu_s ds \nonumber \\
    & \hspace{12pt} + \int_0^t (\hat{\xi}^2_{s-} - (X^u_{s-})^2) dN_s + 2
      \int_0^t \frac{c_{s-}
      }{L_{s-}} \left( \hat{\xi}_{s-} - X^u_{s-} \right) d\tilde{M}_s. \label{eq:C1}
  \end{align}
  Observe that the last two stochastic integrands sum up to $M_t(u)$
  defined in~\eqref{eq:M}. Furthermore, two completions of squares in
  the third line of \eqref{eq:C1} yield
  \begin{align}
    & c_t ( X^u_t - \hat{\xi}_t)^2 \nonumber \\ 
    & = c_0(x-\hat{\xi}_0)^2 + \int_0^t
      c_s d[\hat{\xi}]_s -\int_0^t (X^u_{s})^2 \nu_s ds + 2 \int_0^t X^u_s
      \nu_s \xi_s ds \nonumber \\
    & \hspace{12pt} + \int_0^t \kappa_s \left( u_s -
      \frac{c_s}{\kappa_s} \left( \hat{\xi}_s - X^u_s \right)
      \right)^2 ds + \int_0^t
      (\xi_s - \hat{\xi}_s)^2
      \nu_s ds \nonumber \\
    & \hspace{12pt}  - \int_0^t
      \kappa_s u^2_s ds - \int_0^t \xi^2_s \nu_s ds + M_t(u) \nonumber \\
    &=  ~ c_0(x-\hat{\xi}_0)^2 + \int_0^t
      c_s d[\hat{\xi}]_s - \int_0^t (X^u_s - \xi_s)^2 \nu_s ds \nonumber \\
    & \hspace{12pt} + \int_0^t \kappa_s \left( u_s -
      \frac{c_s}{\kappa_s} \left( \hat{\xi}_s - X^u_s \right)
      \right)^2 ds + \int_0^t
      (\xi_s - \hat{\xi}_s)^2
      \nu_s ds \nonumber \\
    & \hspace{12pt} - \int_0^t
      \kappa_s u^2_s ds +
      M_t(u)  \nonumber 
  \end{align}
  Consequently, we can write
  \begin{align}
    0 \leq C_t(u) = & ~\int_0^t (X^u_s - \xi_s)^2 \nu_s ds 
                      + \int_0^t \kappa_s u^2_s ds + c_t
                      (X^u_t - \hat{\xi}_t)^2 \nonumber \\
    = & ~ c_0(x-\hat{\xi}_0)^2 + \int_0^t
        c_s d[\hat{\xi}]_s + \int_0^t
        (\xi_s - \hat{\xi}_s)^2
        \nu_s ds \nonumber \\
                    & ~ + \int_0^t \kappa_s \left( u_s -
                      \frac{c_s}{\kappa_s} \left( \hat{\xi}_s - X^u_s \right)
                      \right)^2 ds + M_t(u) \nonumber \\
    = & ~ c_0(x-\hat{\xi}_0)^2 + A_t(u) + M_t(u) \quad (0 \leq t < T) 
        \label{eqC3}
  \end{align}
  with $(A_t(u))_{0 \leq t < T}$ as defined in \eqref{eq:A}. Finally,
  observe that the process $(A_t(u))_{0 \leq t < T}$ is a right
  continuous, nondecreasing, adapted process and that
  $(M_t(u))_{0 \leq t < T}$ is a c\`adl\`ag local martingale because
  $\tilde{M}$ and $N$ are local martingales on $[0,T)$ and all
  integrands in \eqref{eq:M} are left continuous (cf., e.g.,
  \citet{Prot:04}, Theorem III.33). Consequently, we have that
  $(C_t(u))_{0 \leq t < T}$ is a nonnegative, c\`adl\`ag local
  submartingale.
\end{proof}

We are now ready to give the proof of our main Theorem \ref{thm:main}:
\medskip

\noindent\textbf{Proof of Theorem~\ref{thm:main}:} First, let
us assume that $\cU^c \neq \varnothing$. For any $u \in \cU^c$ we can
consider the corresponding cost process
$C_t(u) = c_0(x-\hat{\xi}_0)^2 + A_t(u) + M_t(u)$, $0 \leq t < T$, as
in~\eqref{eq:master} of Lemma~\ref{lem:mastereq} above. Let
$(\tau^n)_{n = 1,2,\ldots}$ be a localizing sequence of stopping times
for the local martingale $(M_t(u))_{0 \leq t < T}$ such that
$\tau^n \uparrow T$ $\PP$-a.s. strictly from below as
$n \rightarrow \infty$ and $(M_{t \wedge \tau^n}(u))_{0 \leq t < T}$
is a uniformly integrable martingale for each $n$ (cf., e.g.,
\citet{Prot:04}, Chapter I.6, for more details). Then it holds by
definition of our performance functional~$J$ in
\eqref{eq:defobjective} that
\begin{align}
  \infty > J(u)  
  & \set \limsup_{\tau \uparrow T} \EE[C_{\tau}(u)] \nonumber \\
  & \geq ~c_0(x-\hat{\xi}_0)^2 + \limsup_{n \rightarrow
    \infty} \left\{ \EE[A_{\tau^n}(u)] + \EE[M_{\tau^n}
    (u)] \right\} \nonumber \\
    & = ~c_0(x-\hat{\xi}_0)^2 \nonumber\\
    & \hspace{18pt} + \EE \left[ \int_0^T (\xi_s - \hat{\xi}_s)^2 \nu_s ds +
      \int_0^T c_s d[\hat{\xi}]_s \right. \nonumber\\ 
  & \hspace{95pt} \left. + \int_0^T \kappa_s \left( u_s -
    \frac{c_s}{\kappa_s} \left( \hat{\xi}_s - X^u_s \right)
    \right)^2 ds\right] \nonumber \\
  & \geq ~c_0(x-\hat{\xi}_0)^2 + \EE\left[ \int_0^T
    (\xi_s - \hat{\xi}_s)^2 \nu_s ds \right] +
    \EE \left[
    \int_{[0,T)} c_s d[\hat{\xi}]_s \right], \label{eq:lowerbound1}
 \end{align}
 where we used monotone convergence and applied Doob's Optional
 Sampling Theorem as, e.g., in \citet{Prot:04}, Theorem I.16, in order
 to get $\EE[M_{\tau^n}(u)]=0$ for all $n \geq 1$. In particular,
 the computations in \eqref{eq:lowerbound1} show that
 \eqref{ass:integrability} necessarily holds true if $\cU^c \neq
 \varnothing$ (as assumed for now). In other words,
 setting
\begin{equation}
  v \set  c_0(x-\hat{\xi}_0)^2 + \EE\left[ \int_0^T
    (\xi_s - \hat{\xi}_s)^2 \nu_s ds \right] +
  \EE \left[
    \int_{[0,T)} c_s d[\hat{\xi}]_s \right] < \infty, \label{eq:vfinite}
\end{equation}
we have for all $u \in \cU^c$ the lower bound
\begin{equation}
  J(u) \geq v. \label{eq:lowerbound2}
\end{equation}
Now, let us define the control $\hat{u}$ with corresponding controlled
process $\hat{X} \set X^{\hat{u}}$ via the feedback law
\begin{equation*}
  \hat{u}_t = \frac{c_t}{\kappa_t} (\hat{\xi}_t - \hat{X}_t) \quad (0
  \leq t < T).
\end{equation*}
Observe that $\hat{u}$ is a progressively measurable process and
locally $dt$-integrable and locally $\kappa_tdt$-square-integrable on
$[0,T)$ due
to~\eqref{eq:condkappanu}. In particular,
$\hat{X}_T = x + \int_0^T \hat{u}_t dt$ exists $\PP$-a.s and we can
invoke Lemma~\ref{lem:mastereq}. We denote by
$C_t(\hat{u}) = c_0 (x-\hat{\xi}_0)^2 + M_t(\hat{u}) + A_t(\hat{u})$,
$0 \leq t < T$, the corresponding cost process from this lemma. We
will now show that $\hat{u} \in \cU^c$ and that $\hat{u}$ attains the
lower bound in \eqref{eq:lowerbound2}, i.e.,
\begin{equation*}
  J(\hat{u}) = v
\end{equation*}
finishing our verification argument. Indeed, first note that, by choice
of $\hat{u}$, we have
\begin{equation*}
  A_t(\hat{u})=\int_0^t (\xi_s - \hat{\xi}_s)^2 \nu_s ds + \int_0^t
  c_s d[\hat{\xi}]_s \quad (0 \leq t < T),
\end{equation*}
whence, in particular,
\begin{equation*}
  v = c_0(x-\hat{\xi}_0)^2 + \EE[A_{T-}(\hat{u})] < \infty.
\end{equation*}
Next, since $M(\hat{u})$ is a local martingale on $[0,T)$ by virtue of
Lemma \ref{lem:mastereq} above, we can fix a localizing sequence of
stopping times $(\hat{\tau}^n)_{n=1,2,\ldots}$ such that
$\hat{\tau}^n \uparrow T$ $\PP$-a.s. strictly from below for
$n \rightarrow \infty$ and such that
$(M_{t \wedge \hat{\tau}^n}(\hat{u}))_{0 \leq t < T}$ is a uniformly
integrable martingale for each $n$. Then, for any stopping time
$\tau < T$, applying Fatou's Lemma and once more
Doob's Optional Sampling Theorem yields
\begin{align*}
  \EE[C_{\tau}(\hat{u})] = 
  & ~\EE [ \liminf_{n \rightarrow \infty}
    C_{\tau \wedge \hat{\tau}^n}(\hat{u})] \leq \liminf_{n \rightarrow \infty}
    \EE[C_{\tau \wedge \hat{\tau}^n}(\hat{u})]  \\ 
  = & ~c_0(x-\hat{\xi}_0)^2 + \liminf_{n \rightarrow \infty} \left\{ 
      \EE[A_{\tau \wedge \hat{\tau}^n}(\hat{u})] +
      \EE[M_{\tau \wedge \hat{\tau}^n}(\hat{u})] \right\}  \\
  = & ~ c_0 (x-\hat{\xi}_0)^2 + \liminf_{n
      \rightarrow \infty} \EE[A_{\tau \wedge \hat{\tau}^n}(\hat{u})]
       \\
  = & ~c_0
      (x-\hat{\xi}_0)^2 + \EE[A_{\tau}(\hat{u})] \leq c_0
      (x-\hat{\xi}_0)^2 + \EE[A_{T-}(\hat{u})]= v, 
\end{align*} 
where we also used monotone convergence as well as the fact that
$(A(\hat{u})_t)_{0 \leq t < T}$ is an increasing process. Hence, it
holds that
\begin{equation}
  J(\hat{u}) = \limsup_{\tau \uparrow T} \EE[C_{\tau}(\hat{u})] \leq
  v < \infty \label{eq:upperbound}
\end{equation}
and thus $\hat{u} \in \cU^c$. In particular, due to
\eqref{eq:lowerbound2}, we actually have $J(\hat{u}) = v$ as
desired. 

Finally, let us assume that \eqref{ass:integrability} is
satisfied. Then, it follows from \eqref{eq:vfinite} and
\eqref{eq:upperbound} that $\hat{u} \in \cU^c$, i.e.,
$\cU^c \neq \varnothing$. In other words, condition
\eqref{ass:integrability} is not only necessary but also sufficient
for $\cU^c \neq \varnothing$. \qed \medskip

\begin{Remark} \label{rem:timeconsistent} 
  Let us briefly comment on a few insights offered by the preceding
  proof. The argument rests on the key identity~\eqref{eq:master} of
  Lemma~\ref{lem:mastereq}. For bounded coefficients
  $\kappa, 1/\kappa,\nu,\eta$ and bounded targets $\xi,\Xi_T$, the
  theory of BSRDEs readily allows one to deduce that $M(u)$ of
  \eqref{eq:M} is a true martingale for any control $u$ with finite
  expected costs; see, e.g., \citet{KohlmannTang:02}. From the key
  identity~\eqref{eq:master} it then transpires that the control
  $\hat{u}^c$ of~\eqref{eq:optimalcontrol} minimizes
   \begin{displaymath}
     \EE \left[ \int_0^\tau (X^u_t - \xi_t)^2 \nu_t dt + \int_0^\tau
       \kappa_t u^2_t dt + c_\tau (X^u_\tau - \hat{\xi}^c_\tau)^2
     \right]
  \end{displaymath}
  \emph{simultaneously} for all stopping times $\tau \leq T$. In that
  sense, the above terminal penalizations
  $c_\tau (X^u_\tau - \hat{\xi}^c_\tau)^2$ are consistent replacements
  for $\eta(X^u_T-\Xi_T)^2$ for these problems with shorter time
  horizons.

  When coefficients are unbounded, particularly when
  $\P[\eta=+ \infty]>0$, it is quite possible that $M(u)$ is a strict
  local martingale and so the preceding argument breaks down. Still,
  being bounded from below by an integrable random variable under
  condition~\eqref{ass:integrability}, $M(u)$ is a supermartingale;
  its martingale property thus turns out to hinge on the control in
  $L^1(\P)$ of $c_{\tau^n} (X^u_{\tau^n} - \hat{\xi}^c_{\tau^n})^2$
  along a suitable sequence of stopping times $\tau^n \uparrow
  T$. This control does not seem to be available in the BSRDE
  literature at present, leaving our conjecture~\eqref{eq:conjecture}
  still open at this point.

  Observe, though, that, taking the $\limsup_{\tau \uparrow T}$ of the
  above expectations, the formulation in our auxiliary target
  functional $J^c(u)$ of \eqref{eq:defobjective} avoids these issues
  and thus allows us to solve at least these closely related auxiliary
  problems.
\end{Remark}

The final lemma justifies the interpretation in Remark \ref{rem:osp}:

\begin{Lemma} \label{lem:remark} 
  Let us assume that $\lim_{t \uparrow T} L_t = L_T$ in $L^1(\PP)$ and
  that the local c\`adl\`ag martingale $(N_t)_{0 \leq t < T}$
  in~\eqref{eq:BSRDE} satisfies $\E[[N]^{1/2}_t] < \infty$ for all
  $0 \leq t < T$. Then we have the representation
  \begin{equation} \label{eq:repc1} 
    c_t = \EE \left[ L_T e^{\int_0^t\frac{c_u}{\kappa_u} du} + \int_t^T e^{-\int_t^r
        \frac{c_u}{\kappa_u}du} \nu_r dr \, \bigg\vert \, \cF_t
    \right] \quad (0 \leq t < T).
  \end{equation} 
  Moreover, on $\{ \PP[\int_t^T \nu_r dr = 0 \; \vert \; \cF_t] < 1\}$ we have the identity
    \begin{equation} \label{eq:repw1}
      \EE \left[ \int_t^T \frac{e^{-\int_t^r \frac{c_u}{\kappa_u}du}}{(1-w_t)c_t}
        \nu_r dr \bigg\vert \mathcal{F}_t \right] = 1,
    \end{equation}
    and the weight process $w_t = \EE[L_T \vert \cF_t] / L_t$
    of~\eqref{eq:weight} satisfies
    $0 \leq w_t < 1$.
\end{Lemma}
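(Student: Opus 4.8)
The plan is to extract from the supermartingale $L$ a genuine martingale whose terminal value simultaneously encodes $L_T$ and the discounted running target, and then to read off both~\eqref{eq:repc1} and the weight statement by conditioning. Write $E_t \set \exp\bigl(-\int_0^t c_u/\kappa_u\,du\bigr)$, so that $L_t = c_t E_t$ and, since $E$ is continuous, $L_{t-}/c_{t-} = E_t$. Then the dynamics~\eqref{eq:SDEL} integrate to
\[
L_t = c_0 - \int_0^t E_s \nu_s\,ds - \int_0^t E_s\,dN_s \qquad (0 \le t < T).
\]
Because $c>0$ (Lemma~\ref{lem:cpositivity}) and $\kappa>0$, we have $0 < E_t \le 1$, so the stochastic integral $I_t \set \int_0^t E_s\,dN_s$ satisfies $[I]_t = \int_0^t E_s^2\,d[N]_s \le [N]_t$. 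Hence the hypothesis $\E[[N]_t^{1/2}]<\infty$ and the Burkholder--Davis--Gundy inequality give $\E[\sup_{s\le t}\abs{I_s}]<\infty$ for each $t<T$, so $I$ is a true martingale on $[0,T)$ and therefore $M_t \set L_t + \int_0^t E_s\nu_s\,ds = c_0 - I_t$ is a martingale there.

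First I would record the identity $\E[M_{t'}\mid\cF_t]=M_t$ for $t\le t'<T$ and then let $t'\uparrow T$. By hypothesis $L_{t'}\to L_T$ in $L^1(\PP)$; taking expectations in the martingale identity and using $L_{t'}\ge 0$, $E_s\nu_s\ge 0$ yields $\E[\int_0^{t'}E_s\nu_s\,ds]\le c_0$, so by monotone convergence $\int_0^T E_s\nu_s\,ds\in L^1(\PP)$ and $\int_0^{t'}E_s\nu_s\,ds\to\int_0^T E_s\nu_s\,ds$ in $L^1(\PP)$ as well. Continuity of conditional expectation in $L^1$ then lets me pass to the limit and, after subtracting the $\cF_t$-measurable term $\int_0^t E_s\nu_s\,ds$, obtain
\[
L_t = \E\Bigl[L_T + \int_t^T E_s\nu_s\,ds \,\Big|\, \cF_t\Bigr].
\]
Dividing by the strictly positive, $\cF_t$-measurable factor $E_t$ and using $1/E_t = e^{\int_0^t c_u/\kappa_u\,du}$ and $E_r/E_t = e^{-\int_t^r c_u/\kappa_u\,du}$ converts this into~\eqref{eq:repc1}.

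For the weight statement I would split the last display as $L_t = \E[L_T\mid\cF_t] + \E[\int_t^T E_s\nu_s\,ds\mid\cF_t]$. Since $L_t>0$ and $L_T\ge 0$ by Lemma~\ref{lem:L}, the definition $w_t=\E[L_T\mid\cF_t]/L_t$ gives $w_t\ge 0$ at once, together with
\[
(1-w_t)\,c_t = \frac{1}{E_t}\,\E\Bigl[\int_t^T E_s\nu_s\,ds \,\Big|\, \cF_t\Bigr] = \E\Bigl[\int_t^T e^{-\int_t^r c_u/\kappa_u\,du}\,\nu_r\,dr \,\Big|\, \cF_t\Bigr].
\]
To get $w_t<1$ on $\{\PP[\int_t^T\nu_r\,dr=0\mid\cF_t]<1\}$, I would note that $E_r>0$ for every $r<T$ (because $c$ is càdlàg, hence bounded on each $[0,r]$, and $\int_0^T 1/\kappa_u\,du<\infty$ by~\eqref{eq:condkappanu}), so $\{\int_t^T E_r\nu_r\,dr>0\}=\{\int_t^T\nu_r\,dr>0\}$ up to a null set; on the set in question this event has positive conditional probability, forcing the nonnegative conditional expectation above to be strictly positive, whence $(1-w_t)c_t>0$. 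Dividing the displayed identity by the positive quantity $(1-w_t)c_t$ then yields~\eqref{eq:repw1}.

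The hard part will be the analytic upgrades rather than the algebra: passing from ``$I$ is a local martingale'' to ``$I$ is a true martingale'' and justifying the $L^1$-limit as $t'\uparrow T$. These are exactly where the two hypotheses of the lemma enter—$\E[[N]_t^{1/2}]<\infty$ feeds the BDG bound and the assumed $L^1$-convergence of $L_{t'}$ feeds the limit passage—and the delicate point is that the control is only available uniformly on each $[0,t]$ with $t<T$, never up to the singular time $T$ itself, so all conditioning must be carried out strictly before $T$ before taking limits.
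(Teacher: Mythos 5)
Your proposal is correct and follows essentially the same route as the paper's proof: integrate the SDE for $L$, use the $[N]^{1/2}$-integrability to make the stochastic integral a true martingale, pass to the limit $t'\uparrow T$ via $L^1$-convergence of $L$ and monotone convergence, divide by the exponential factor to get \eqref{eq:repc1}, and read off the weight identity and $0\le w_t<1$ from the resulting decomposition of $L_t$. The only differences are cosmetic (you spell out the BDG bound and the strict-positivity argument for $(1-w_t)c_t$ slightly more explicitly than the paper does).
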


\begin{proof}
  Recall the dynamics of the process $(L_t)_{0 \leq t < T}$ in
  \eqref{eq:SDEL}, i.e.,
  \begin{equation*}
    L_t = c_0 - \int_0^t e^{-\int_0^r \frac{c_u}{\kappa_u}du} \nu_r dr
    - \int_0^t e^{-\int_0^r \frac{c_u}{\kappa_u}du}
    dN_r \quad (0 \leq t < T).
  \end{equation*}
  Hence, for all $0 \leq t \leq s < T$ we may write
  \begin{equation}
    L_s - L_t = -\int_t^s e^{-\int_0^r \frac{c_u}{\kappa_u}du} \nu_r dr
    - \int_t^s e^{-\int_0^r \frac{c_u}{\kappa_u}du} dN_r. \label{eq:repc2}
  \end{equation}
  Observe that the stochastic integral in \eqref{eq:repc2} is a
  martingale on $[0,T)$ by our integrability assumption on
  $[N]^{1/2}_t$. Thus, taking conditional expectations
  in~\eqref{eq:repc2} yields
  \begin{equation}
    \EE[ L_s \vert \cF_t] - L_t = - \EE \left[\int_t^s e^{-\int_0^r
        \frac{c_u}{\kappa_u}du} \nu_r dr \, \bigg\vert \, \cF_t
    \right] \quad (0 \leq t \leq s < T). \label{eq:repc3}
  \end{equation}
  Passing to the limit $s \uparrow T$ in \eqref{eq:repc3} we obtain,
  due to monotone convergence and due to the assumption that $L_s$
  converges in $L^1(\PP)$ to $L_T$, the representation
  \begin{equation}
    L_t = \EE \left[ L_T + \int_t^T e^{-\int_0^r
        \frac{c_u}{\kappa_u}du} \nu_r dr \, \bigg\vert \, \cF_t
    \right] \quad (0 \leq t < T). \label{eq:repc4}
  \end{equation}
  In other words, using that
  $L_t=c_t e^{-\int_0^t \frac{c_u}{\kappa_u} du}$, we can write
  \begin{equation*}
    c_t = \EE \left[ L_T e^{\int_0^t\frac{c_u}{\kappa_u} du} + \int_t^T e^{-\int_t^r
        \frac{c_u}{\kappa_u}du} \nu_r dr \, \bigg\vert \, \cF_t
    \right] \quad (0 \leq t < T)
  \end{equation*}
  as desired for \eqref{eq:repc1}. Finally, by definition of the
  weight process $(w_t)_{0 \leq t < T}$ in~\eqref{eq:weight} together
  with the identity in~\eqref{eq:repc1}, we can write
  \begin{align}
    w_t 
    & = \frac{\EE[L_T \vert \cF_t]}{L_t} 
      = \frac{ e^{\int_0^t \frac{c_u}{\kappa_u}
      du}}{c_t} \EE [ L_T \,\vert \, \mathcal{F}_t] =
      \frac{1}{c_t} \EE \left[ e^{\int_0^t \frac{c_u}{\kappa_u}
      du} L_T \,\Big\vert \, \mathcal{F}_t \right] \nonumber \\
    & = \frac{1}{c_t} 
      \left( c_t - \EE \left[ \int_t^T e^{-\int_t^r \frac{c_u}{\kappa_u} du}
      \nu_r dr \, \Big\vert \, \mathcal{F}_t \right] \right) \nonumber
    \\
    & = 1 - \frac{1}{c_t} \EE \left[ \int_t^T e^{-\int_t^r \frac{c_u}{\kappa_u} du}
      \nu_r dr \, \Big\vert \, \mathcal{F}_t \right] \quad
      \text{for all } 0 \leq t < T, \label{eq:repw2}
  \end{align}
  which yields our claim~\eqref{eq:repw1}. In particular,
  representation~\eqref{eq:repw2} also reveals that $0 \leq w_t < 1$
  $\PP$-a.s. on
  $\{ \PP[\int_t^T \nu_r dr = 0 \; \vert \; \cF_t] < 1\}$ for all
  $0 \leq t < T$.
\end{proof}

\section*{Appendix} 

In this appendix, we collect some results on the BSRDE
in~\eqref{eq:BSRDE} with terminal condition~\eqref{eq:BSRDEtc} which
may be of independent interest for the theory of BSDEs. First, let us
provide lower estimates for a minimal supersolution $c^{\min}$ to the
Riccati BSDE in~\eqref{eq:BSRDE} with terminal
condition~\eqref{eq:BSRDEtc}.


\begin{Lemma} \label{app:lem:lbound} 
  Let $(\nu_t)_{0 \leq t \leq T}$, $(\kappa_t)_{0 \leq t \leq T}$
  satisfy~\eqref{eq:condkappanu} and let $c^{\min}$ denote a minimal
  supersolution to~\eqref{eq:BSRDE} with terminal
  condition~\eqref{eq:BSRDEtc}. Then for all $t \in [0, T)$ we have
  \begin{equation} \label{app:lbound} 
    c^{\min}_t \geq \EE \left[
      \frac{1}{\int_t^T \frac{1}{\kappa_s} ds + \frac{1}{\eta}} \,
      \bigg\vert \, \cF_t \right] \geq 0 \quad \PP\text{-a.s.}
  \end{equation}
  with strict inequality holding true in the first estimate on
  $\{ \PP[\int_t^T \nu_s ds = 0\vert \cF_t] < 1 \}$ and strict
  inequality in the second estimate on
  $\{ \PP[\eta = 0 \vert \cF_t] < 1\}$. In particular, any
  supersolution $c$ of~\eqref{eq:BSRDE} and~\eqref{eq:BSRDEtc} will be
  strictly positive throughout $[0,T)$ if~\eqref{ass:etakappa} holds true.
\end{Lemma}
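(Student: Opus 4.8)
The plan is to recognise the right-hand side of \eqref{app:lbound} as the conditional expectation of the pathwise value of a deterministic optimal-liquidation problem, and to show that $c^{\min}$ dominates it by a verification argument, dropping the nonnegative $\nu$-contribution. First I would reduce to bounded terminal data: recall from \citet{KrusePopier:16_1} that $c^{\min}_t = \lim_{n\uparrow\infty} c^{(n)}_t$, where $c^{(n)}$ is the genuine solution of \eqref{eq:BSRDE} with $\lim_{t\uparrow T}c^{(n)}_t = \eta\wedge n =: \eta_n$. The boundedness of $\eta_n$ is essential: it keeps the relevant quantities finite up to $T$ and so makes the limit $s\uparrow T$ in the verification below tractable, whereas for $c^{\min}$ itself the product $\eta\,\hat X_T^2$ is of the indeterminate form $\infty\cdot 0$ on $\{\eta=+\infty\}$. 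Since $\eta_n\uparrow\eta$ gives $(\int_t^T\kappa_s^{-1}ds + \eta_n^{-1})^{-1}\uparrow (\int_t^T\kappa_s^{-1}ds + \eta^{-1})^{-1}$ pointwise, monotone convergence will reduce the first estimate in \eqref{app:lbound} to the corresponding estimate for each $c^{(n)}$.

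For fixed $n$, pick any $x\neq 0$ and consider the state $\hat X_s = x\exp(-\int_t^s c^{(n)}_u/\kappa_u\,du)$ on $[t,T)$ driven by the feedback $\hat u_s = -\tfrac{c^{(n)}_s}{\kappa_s}\hat X_s$. Using It\^o's formula and the Riccati dynamics \eqref{eq:BSRDE}, a completion of squares shows that
\begin{equation*}
  D_s := c^{(n)}_s \hat X_s^2 + \int_t^s (\nu_r\hat X_r^2 + \kappa_r\hat u_r^2)\,dr
\end{equation*}
has vanishing drift (the $\nu\hat X^2$ term cancels), hence is a local martingale on $[t,T)$; being bounded below it is a supermartingale. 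Letting $s\uparrow T$, using $\lim_{s\uparrow T}c^{(n)}_s=\eta_n$ together with conditional Fatou, I obtain
\begin{equation*}
  c^{(n)}_t\, x^2 = D_t \;\geq\; \EE\left[\int_t^T(\nu_r\hat X_r^2 + \kappa_r\hat u_r^2)\,dr + \eta_n\hat X_T^2 \,\Big|\,\cF_t\right].
\end{equation*}

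Next I would drop $\int_t^T\nu_r\hat X_r^2\,dr\ge 0$ and bound the rest pathwise: for $\PP$-a.e.\ $\omega$, Cauchy--Schwarz applied to $\int_t^T\kappa_r v_r^2\,dr + \eta_n Y_T^2$ over all paths with $Y_t=x$ yields the minimal value $x^2(\int_t^T\kappa_r^{-1}dr + \eta_n^{-1})^{-1}$, whence $\int_t^T\kappa_r\hat u_r^2\,dr + \eta_n\hat X_T^2 \ge x^2(\int_t^T\kappa_r^{-1}dr + \eta_n^{-1})^{-1}$. Dividing by $x^2$, combining, and letting $n\uparrow\infty$ establishes the first estimate, together with the refinement $c^{\min}_t - \EE[(\int_t^T\kappa_r^{-1}dr + \eta^{-1})^{-1}\mid\cF_t] \ge \EE[\int_t^T\nu_r e^{-2\int_t^r c^{\min}_u/\kappa_u du}dr\mid\cF_t]$. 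The strict inequalities then follow for free: the retained $\nu$-term is strictly positive in conditional expectation exactly on $\{\PP[\int_t^T\nu_r dr = 0\mid\cF_t] < 1\}$, because the exponential factor is strictly positive; and the second estimate with its strictness on $\{\PP[\eta = 0\mid\cF_t]<1\}$ is elementary, since $\int_t^T\kappa_r^{-1}dr<\infty$ by \eqref{eq:condkappanu} forces the integrand to be $>0$ precisely on $\{\eta>0\}$. Combining the two strict estimates under \eqref{ass:etakappa} gives $c^{\min}_t>0$, and since any supersolution dominates the minimal one, strict positivity extends to all supersolutions.

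I expect the main obstacle to be the rigorous passage $s\uparrow T$ in the verification: justifying the supermartingale/Fatou step up to the terminal time and the interchange with $\lim_{s\uparrow T}c^{(n)}_s=\eta_n$, for which the boundedness of the approximating $\eta_n$ (and hence the finiteness of $\eta_n\hat X_T^2$) is exactly what is needed. Note also that the naive route of showing $\frac{1}{c_t}+\int_0^t\kappa_s^{-1}ds$ is a submartingale only yields the weaker bound $c_t\ge (\EE[\int_t^T\kappa_s^{-1}ds + \eta^{-1}\mid\cF_t])^{-1}$ (Jensen pointing the wrong way); the pathwise Cauchy--Schwarz bound above is precisely what recovers the stronger conditional-expectation estimate \eqref{app:lbound}.
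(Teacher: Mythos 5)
Your argument is correct, but it takes a genuinely different route from the paper's. The paper identifies the right-hand side of \eqref{app:lbound} directly as a BSDE (super)solution: it checks the pathwise Riccati ODE identity
\begin{equation*}
  \frac{1}{\int_t^T \kappa_s^{-1}\,ds + (\eta\wedge n)^{-1}}
  = \eta\wedge n - \int_t^T \frac{1}{\kappa_s}\Bigl(\frac{1}{\int_s^T \kappa_u^{-1}\,du + (\eta\wedge n)^{-1}}\Bigr)^{2} ds,
\end{equation*}
takes conditional expectations to see that $\Gamma^n_t \set \EE[\,\cdot\,\vert\cF_t]$ solves a BSDE whose driver is, by Jensen's inequality, dominated by the Riccati driver $-y^2/\kappa_t+\nu_t$, and then invokes the comparison theorem of \citet{KrusePopier:16_2} together with the truncation construction of $c^{\min}$ in \citet{KrusePopier:16_1}; strictness on $\{\PP[\int_t^T\nu_s\,ds=0\vert\cF_t]<1\}$ comes from strict comparison. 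You instead run a control-theoretic verification: $c^{(n)}_t x^2$ dominates the conditional cost of the feedback trajectory (your process $D$ is indeed a local martingale, $dD_s=-\hat X_s^2\,dN_s$, and nonnegative once one grants $c^{(n)}\ge 0$, which is part of the \citet{KrusePopier:16_1} construction and should be stated), and the pathwise Cauchy--Schwarz computation recovers exactly the deterministic Riccati value $x^2(\int_t^T\kappa_r^{-1}dr+\eta_n^{-1})^{-1}$ that the paper feeds in via the ODE identity. Both proofs share the truncation $\eta\wedge n$ and the monotone passage $n\uparrow\infty$. What your route buys is self-containedness --- It\^o, Fatou and Cauchy--Schwarz replace the external BSDE comparison theorem --- plus the explicit residual term $\EE[\int_t^T\nu_r e^{-2\int_t^r c_u/\kappa_u du}dr\,\vert\,\cF_t]$, which makes the strict inequality transparent (note only that in the $n$-th step the exponent carries $c^{(n)}$, and one passes to $c^{\min}$ using $c^{(n)}\le c^{\min}$, and that the exponential is strictly positive for $r<T$ because $c^{\min}$ is locally bounded on $[0,T)$ and $\int_t^r\kappa_u^{-1}du<\infty$ by \eqref{eq:condkappanu}). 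What the paper's route buys is brevity and a formulation that stays entirely within the BSDE comparison framework it relies on elsewhere. Your closing remark that the submartingale property of $1/c_t+\int_0^t\kappa_s^{-1}ds$ only gives the weaker harmonic-mean bound is a correct and worthwhile observation.
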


\begin{proof} 
  We will adopt the same idea as in the proof of Lemma 11
  in~\citet{Popier:06} in the case $\kappa \equiv 1$ (and
  $\nu \equiv 0$). For all $n \geq 1$ we define the
  processes 
  \begin{equation*}
    \Gamma^n_t \set \EE \left[ \frac{1}{\int_t^T \frac{1}{\kappa_s} ds +
        \frac{1}{\eta \wedge n}}
      \, \bigg\vert \, \cF_t \right] \quad (0 \leq t \leq T).
  \end{equation*}
  Note that $\Gamma^n$ is well defined because the term in the
  conditional expectation is bounded by $n$. Moreover, we have
  pathwise the identity
  \begin{equation*}
    \frac{1}{\int_t^T \frac{1}{\kappa_s} ds +
      \frac{1}{\eta \wedge n}} = \eta \wedge n - \int_t^T
    \frac{1}{\kappa_s} \left( \frac{1}{\int_s^T \frac{1}{\kappa_u} du +
        \frac{1}{\eta \wedge n}} \right)^2 ds \quad (0 \leq t \leq T).
  \end{equation*}
  Thus, the process $\Gamma^n$ verifies
  \begin{align*}
    \Gamma^n_t 
    & = \EE \left[ \eta \wedge n - \int_t^T
      \frac{1}{\kappa_s} \left( \frac{1}{\int_s^T \frac{1}{\kappa_u} du +
      \frac{1}{\eta \wedge n}} \right)^2 ds \, \Bigg\vert \, \cF_t
      \right] \\
    & = \EE \left[ \eta \wedge n - \int_t^T \frac{1}{\kappa_s} 
      \left( (\Gamma_s^n)^2 + U^n_s \right) ds \, \Big\vert \, \cF_t
      \right] \quad (0 \leq t \leq T)
  \end{align*}
  with adapted process $U^n$ given by
  \begin{equation*}
    U^n_s \set \EE \left[ \left( \frac{1}{ \int_s^T \frac{1}{\kappa_u} du +
          \frac{1}{\eta \wedge n} } \right)^2
      \, \Bigg\vert \, \cF_s \right] - (\Gamma^n_s)^2 \quad (0 \leq s
    \leq T).
  \end{equation*}
  Observe that
  \begin{equation*}
    d\Gamma^n_t = \left( \frac{(\Gamma_t^n)^2}{\kappa_t} +
      \frac{U^n_t}{\kappa_t} \right) + dM^n_t, \quad \Gamma^n_T = \eta
    \wedge n,
  \end{equation*}
  for some c\`adl\`ag local martingale $(M^n_t)_{0 \leq t \leq
    T}$. Moreover, since $U^n_t \geq 0$ for all $0 \leq t \leq T$ due
  to Jensen's inequality, we have
  \begin{equation*}
    -\frac{y^2}{\kappa_t} - \frac{U^n_t}{\kappa_t} \leq
    -\frac{y^2}{\kappa_t} \leq -\frac{y^2}{\kappa_t} + \nu_t \quad (y
    \in \mathbb{R}).
  \end{equation*}
  Thus, classical comparison results as in \citet{KrusePopier:16_2}, Proposition
  4, together with the construction of the minimal supersolution
  $(c^{\min}_t)_{0 \leq t < T}$ via a truncation procedure in
  \cite{KrusePopier:16_1}, finally yields that for all $t \in [0, T)$
  we have
  \begin{equation*}
    c^{\min}_t \geq \EE \left[ \frac{1}{\int_t^T \frac{1}{\kappa_s} ds +
        \frac{1}{\eta \wedge n}}
      \, \bigg\vert \, \cF_t \right] \geq 0\quad \PP\text{-a.s.}
  \end{equation*}
In fact on $\{ \PP[\int_t^T \nu_s ds = 0\vert \cF_t] < 1 \}$
comparison is strict in the first of these estimates. Moreover,
letting $n \rightarrow \infty$ we conclude~\eqref{app:lbound} where
``$>0$'' holds on $\{ \PP[\eta = 0 \vert \cF_t] < 1\}$.
\end{proof}

Finally, let us briefly discuss the integrability condition
in~\eqref{eq:BSRDEintcond2} for the minimal supersolution
$(c^{\min}_t)_{0 \leq t < T}$ with Riccati dynamics~\eqref{eq:BSRDE}
satisfying the terminal condition~\eqref{eq:BSRDEtc}. This condition
is not regularly discussed in the BSRDE literature and thus calls for
a verification in some sufficiently generic setting. So let us place
ourselves in the context of \citet{KrusePopier:16_1} and therein
restrict ourselves to a Brownian framework. It follows from
Proposition 3 and Remark 4 as well as Corollary~1 in
\cite{KrusePopier:16_1} with $p=2$ that for any $t \in [0,T)$ we have
the upper estimates
\begin{equation} \label{app:ubound}
  c^{\min}_t \leq \frac{1}{(T-t)^2} \EE \left[ \int_t^T (\kappa_s + (T-s)^2
  \nu_s) ds \, \bigg\vert \, \cF_t \right] \quad \PP\text{-a.s.}
\end{equation}
In addition to that, observe that also the lower estimates derived in
Lemma~\ref{app:lem:lbound} hold true.

For simplicity, let us further confine ourselves to the following
additional assumptions on $(\nu_t)_{0 \leq t \leq T}$,
$(\kappa_t)_{0 \leq t \leq T}$ and $\eta$: We assume that the process
$(\kappa_t)_{0 \leq t \leq T}$ is bounded from below and above, i.e.,
it holds that
\begin{equation} \label{app:condkappa} 
  0 < k \leq \kappa_t \leq K <
  \infty \quad (0 \leq t \leq T)
\end{equation}
for some constants $k,K \in \RR$.  Moreover, we assume that $\nu \in
L^1(\PP \otimes dt)$ with
\begin{equation} \label{app:condnu2}
\frac{1}{T-t} \EE \left[ \int_t^T (T-s)^2 \nu_s ds \, \bigg\vert \,
  \cF_t \right] \leq C \qquad (0 \leq t < T)
\end{equation}
for some constant $C < \infty$. Finally, we assume that there exists a
constant $\varepsilon > 0$ such that
\begin{equation} \label{app:condeta}
\PP \left[ \varepsilon \leq \eta \leq + \infty \right] = 1.
\end{equation}
Observe that condition~\eqref{app:condeta} implies in particular that
$c_t>0$ $\PP$-a.s. for all $t \in [0,T]$ by virtue of
Lemma~\ref{app:lem:lbound}.

\begin{Lemma} 
  Under the conditions \eqref{app:condkappa}, \eqref{app:condnu2}, and
  \eqref{app:condeta} the minimal supersolution
  $c \set (c^{\min}_t)_{0 \leq t < T}$ to the BSRDE
  in~\eqref{eq:BSRDE} on $[0,T)$ with terminal
  condition~\eqref{eq:BSRDEtc} satisfies
  \begin{equation*}
    \int_0^T \frac{d\langle c\rangle_t}{c_t^2} < \infty \quad \text{on
    the set } \{ \eta = + \infty \},
  \end{equation*}
  i.e., condition \eqref{eq:BSRDEintcond2} holds true.
\end{Lemma}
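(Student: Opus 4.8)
The plan is to reduce the claim to a uniform energy estimate along the truncation sequence $c^{(n)}$ out of which $c=c^{\min}$ is built, following the strategy of \citet{Popier:06}, Lemma~11. I work in the Brownian framework throughout, so the martingale $N$ in~\eqref{eq:BSRDE} has the form $N_t=\int_0^t Z_s\,dW_s$ and $d\langle c\rangle_t=d\langle N\rangle_t=|Z_t|^2\,dt$; the assertion then reads $\int_0^T |Z_t|^2/c_t^2\,dt<\infty$ on $\{\eta=+\infty\}$. Recall from \citet{KrusePopier:16_1} that $c_t=\lim_{n\uparrow\infty}c^{(n)}_t$ monotonically, where each $c^{(n)}$ is the genuine BSRDE solution with bounded terminal value $\eta\wedge n$. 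Under \eqref{app:condkappa} and $\nu\in L^1(\PP\otimes dt)$ each $c^{(n)}$ is bounded above, and by \eqref{app:condeta} together with Lemma~\ref{app:lem:lbound} it is bounded below by a strictly positive constant $c_\ast$ \emph{uniformly} in $n$ (since $\eta\wedge n\ge\varepsilon$ once $n\ge\varepsilon$), with control process $Z^{(n)}\in L^2(\PP\otimes dt)$.

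First I would run, for each fixed $n$, the Itô computation behind Lemma~\ref{lem:L} for $L^{(n)}_t\set c^{(n)}_t\exp(-\int_0^t c^{(n)}_u/\kappa_u\,du)$. As a strictly positive Dol\'eans exponential, $\log L^{(n)}$ satisfies $\log L^{(n)}_t=\log c^{(n)}_0-\int_0^t \nu_s/c^{(n)}_s\,ds-\int_0^t Z^{(n)}_s/c^{(n)}_s\,dW_s-\tfrac12\int_0^t |Z^{(n)}_s|^2/(c^{(n)}_s)^2\,ds$. Because $c^{(n)}$ is bounded away from $0$ and $\infty$ and has the finite terminal value $c^{(n)}_T=\eta\wedge n$, all integrals extend to $t=T$ and the stochastic integral is a true martingale. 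Rearranging and taking expectations yields the clean identity
\[
\tfrac12\,\EE\!\left[\int_0^T \frac{|Z^{(n)}_s|^2}{(c^{(n)}_s)^2}\,ds\right]=\log c^{(n)}_0-\EE\!\left[\log L^{(n)}_T\right]-\EE\!\left[\int_0^T \frac{\nu_s}{c^{(n)}_s}\,ds\right],
\]
where $L^{(n)}_T=(\eta\wedge n)\exp(-\int_0^T c^{(n)}_u/\kappa_u\,du)$. The last expectation is nonnegative and bounded by $c_\ast^{-1}\EE\int_0^T\nu_s\,ds<\infty$, so it may be dropped for an upper bound, and $\log c^{(n)}_0\le\log c^{\min}_0<\infty$.

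The heart of the matter, and the step I expect to be the main obstacle, is to bound $-\EE[\log L^{(n)}_T]$ from above uniformly in $n$, i.e.\ to show that $\EE[\int_0^T c^{(n)}_s/\kappa_s\,ds-\log(\eta\wedge n)]$ stays bounded as $n\to\infty$. On $\{\eta=+\infty\}$ both terms diverge individually — the integral because $c^{(n)}_s\uparrow c_s\sim(T-s)^{-1}$, and $\log(\eta\wedge n)=\log n\to\infty$ — so everything hinges on a precise cancellation of their logarithmic rates. This is exactly where the sharp two-sided estimates enter: the upper bound $c^{(n)}_t\le c^{\min}_t\le (K+C)/(T-t)$ provided by~\eqref{app:ubound} together with \eqref{app:condkappa} and \eqref{app:condnu2}, and the matching lower bound $c^{(n)}_t\ge\EE[(\int_t^T \kappa_s^{-1}\,ds+(\eta\wedge n)^{-1})^{-1}\mid\cF_t]$ from Lemma~\ref{app:lem:lbound}, pin the blow-up of $c^{(n)}$ to the order $(T-t)^{-1}$ with the correct $\kappa$-dependent constant. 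Offsetting $\int_0^T c^{(n)}_s/\kappa_s\,ds$ against $\log(\eta\wedge n)$ through these barriers — mirroring the deterministic check where $\int_0^T c^{(n)}_s\,ds=\log(nT+1)$ cancels $\log n$ up to $\log T$ — is the delicate computation to be performed, and is the stochastic analogue of Popier's Lemma~11.

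Granting a uniform bound $\sup_n \EE\int_0^T |Z^{(n)}_s|^2/(c^{(n)}_s)^2\,ds\le C_0<\infty$, I would conclude by passing to the limit. Since $c^{(n)}\uparrow c$ gives $(c^{(n)})^{-2}\ge c^{-2}$ and, by the monotone approximation of \citet{KrusePopier:16_1}, $Z^{(n)}\to Z$ along a subsequence $\PP\otimes dt$-a.e., Fatou's lemma yields $\EE[\mathbf 1_{\{\eta=+\infty\}}\int_0^T |Z_s|^2/c_s^2\,ds]\le\liminf_n\EE\int_0^T |Z^{(n)}_s|^2/(c^{(n)}_s)^2\,ds\le C_0<\infty$, so that $\int_0^T d\langle c\rangle_t/c_t^2<\infty$ $\PP$-a.s.\ on $\{\eta=+\infty\}$, which is precisely~\eqref{eq:BSRDEintcond2}. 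The only subtlety in this final step is the a.e.\ convergence of the controls $Z^{(n)}$, which I would import from the minimal-supersolution construction rather than re-derive.
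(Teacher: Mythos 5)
Your proposal has a genuine gap at precisely the step you yourself flag as ``the heart of the matter'': the uniform-in-$n$ bound on $-\EE[\log L^{(n)}_T]=\EE\bigl[\int_0^T c^{(n)}_s/\kappa_s\,ds-\log(\eta\wedge n)\bigr]$ is never established, and it is not clear it can be obtained from the estimates you invoke. The lower bound of Lemma~\ref{app:lem:lbound} only controls $\int_0^T c^{(n)}_s/\kappa_s\,ds$ from \emph{below} (by roughly $(k/K)\log n$), while the upper bound~\eqref{app:ubound} concerns $c^{\min}$, is of order $(T-t)^{-1}$, and is therefore \emph{not integrable} over $[0,T]$ --- it cannot cap $\EE\int_0^T c^{(n)}_s/\kappa_s\,ds$ at $\log(\eta\wedge n)+O(1)$. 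In the deterministic case the cancellation is exact with coefficient one because $\int_0^T c^{(n)}_s\kappa_s^{-1}\,ds=\log\bigl(1+(\eta\wedge n)\int_0^T\kappa_s^{-1}ds\bigr)$, but with stochastic $\kappa$ and the conditional expectations in the two barriers the logarithmic rates need not match, and no $n$-dependent upper estimate of the required precision is available in the cited literature. Since everything in your argument funnels through this one expectation, the proof is incomplete at its central step. (A secondary issue: your claim that each $c^{(n)}$ is bounded above requires bounded $\nu$, whereas here only $\nu\in L^1(\PP\otimes dt)$ plus~\eqref{app:condnu2} is assumed; this affects the true-martingale claim for $\int_0^\cdot Z^{(n)}_s/c^{(n)}_s\,dW_s$.)

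For comparison, the paper avoids the truncation sequence entirely and works directly with $c=c^{\min}$: it applies It\^o's formula to $\sqrt{(T-t)c_t}$ on $[0,T-\delta]$. The point of this particular test function is that the drift contributions combine into $\tfrac12\sqrt{T-s}\,\sqrt{c_s}\,\kappa_s^{-1}\bigl(c_s-\nu_s\kappa_s/c_s-\kappa_s/(T-s)\bigr)$, and the \emph{difference} $c_s-\kappa_s/(T-s)$ is exactly what the upper bound~\eqref{app:ubound} renders integrable in expectation after Fubini --- this is the analogue of your sought cancellation, but executed with quantities that the available estimates actually control. This produces $\EE\int_0^T\sqrt{T-s}\,c_s^{-3/2}\,d\langle c\rangle_s<\infty$, and the weight $\sqrt{T-s}\,c_s^{-3/2}=\sqrt{T-s}\sqrt{c_s}\,c_s^{-2}$ is then bounded below by $\sqrt{k}\,\EE[1_{\{\eta=+\infty\}}\mid\cF_s]\,c_s^{-2}$ via~\eqref{app:lbound}, so that optional projection localizes the conclusion to $\{\eta=+\infty\}$. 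If you want to salvage your route, you would need to prove the matching upper estimate $\EE\int_0^T c^{(n)}_s/\kappa_s\,ds\le\EE[\log(\eta\wedge n)]+O(1)$ uniformly in $n$; absent that, I recommend switching to the $\sqrt{(T-t)c_t}$ argument.
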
 

\begin{proof}
  We extend the proof of Proposition 10 in \citet{Popier:06} done for
  the specific case $\kappa \equiv 1$ and $\nu \equiv 0$ to our more
  general setting by using the upper and lower bounds of the process
  $(c_t)_{0 \leq t < T}$ in \eqref{app:ubound} and
  \eqref{app:lbound}. First, note that conditions
  \eqref{app:condkappa} and \eqref{app:condeta} imply for the lower
  bound in \eqref{app:lbound} that
  \begin{equation} \label{app:proof:lp}
    c_t \geq \frac{k
      \varepsilon}{(T-t) \varepsilon + k} \quad (0 \leq t < T).
  \end{equation} 
 Concerning the upper bound in \eqref{app:ubound}, we obtain due to
 \eqref{app:condkappa} and \eqref{app:condnu2}
  \begin{equation} \label{app:proof:up}
    c_t \leq \frac{K + \text{const}}{T-t} \quad (0 \leq t < T).
  \end{equation}
  Since the process $c$ is bounded from below on $[0,T]$, we can apply
  It\^o's formula on $[0,T-\delta]$ for some $0 < \delta < T$ to the
  process $\sqrt{(T-t) c_t}$. Using the BSRDE dynamics of $c$ in
  \eqref{eq:BSRDE}, we obtain
  \begin{align*}
    0 & \leq \sqrt{(T-t) c_t} \\
      & = \sqrt{T c_0} + \int_0^t \left( \frac{\sqrt{T-s}}{2
        \sqrt{c_s}} \left(
        \frac{c_s^2}{\kappa_s} - \nu_s \right) 
        - \frac{\sqrt{c_s}}{2\sqrt{T-s}} \right) ds \nonumber \\
      & \hspace{12pt} 
        -\frac{1}{8} \int_0^t \frac{\sqrt{T-s}}{c_s^{3/2}} d\langle c
        \rangle_s - \frac{1}{2} \int_0^t \frac{\sqrt{T-s}}{\sqrt{c_s}} dN_s \\
      & = \sqrt{T c_0} + \frac{1}{2} \int_0^t \sqrt{T-s}
        \frac{\sqrt{c_s}}{\kappa_s} \left( c_s - \frac{\nu_s \kappa_s}{c_s}
        - \frac{\kappa_s}{T-s} \right) ds \nonumber \\
      & \hspace{12pt} 
        -\frac{1}{8} \int_0^t \frac{\sqrt{T-s}}{c_s^{3/2}} d\langle c
        \rangle_s - \frac{1}{2} \int_0^t \frac{\sqrt{T-s}}{\sqrt{c_s}} dN_s
        \qquad (0 \leq t \leq T-\delta)
  \end{align*} 
  and hence
  \begin{align}
    & \frac{1}{8} \int_0^{T-\delta} \frac{\sqrt{T-s}}{c_s^{3/2}} d\langle c
      \rangle_s + \frac{1}{2} \int_0^{T-\delta} \frac{\sqrt{T-s}}{\sqrt{c_s}} dN_s
      \nonumber \\
    & \leq \sqrt{T c_0} + \frac{1}{2} \int_0^{T-\delta} \sqrt{T-s}
      \frac{\sqrt{c_s}}{\kappa_s} \left( c_s - \frac{\nu_s \kappa_s}{c_s}
      - \frac{\kappa_s}{T-s} \right) ds \label{app:proof:ito}
  \end{align}
  for all $0 < \delta < T$. Observe that due to the bounds on $c$ in
  \eqref{app:proof:lp} and \eqref{app:proof:up} and $\kappa$ in
  \eqref{app:condkappa} as well as the integrability assumption on
  $\nu$, i.e., $\nu \in L^1(\PP \otimes dt)$, it holds for all $0 < \delta < T$ that
  \begin{align*}
    & \EE \left[ \int_0^{T-\delta} \sqrt{T-s}
      \frac{\sqrt{c_s}}{\kappa_s} \left\vert c_s - \frac{\nu_s \kappa_s}{c_s}
      - \frac{\kappa_s}{T-s} \right\vert ds \right] \\
    & \leq \text{const} \, \EE \left[ \int_0^{T-\delta} \left\vert c_s 
      - \frac{\nu_s \kappa_s}{c_s} - \frac{\kappa_s}{T-s} \right\vert ds
      \right] \\
    & \leq \text{const} \left( \, \EE \left[ \int_0^{T-\delta} c_s ds
      \right] + \EE \left[ \int_0^{T-\delta} \frac{\nu_s
      \kappa_s}{c_s} ds \right] + 
      \EE \left[ \int_0^{T-\delta} \frac{\kappa_s}{T-s} ds
      \right] \right) < \infty.
  \end{align*} 
  Hence, by using the upper bound on $c$ in \eqref{app:ubound} and
  Fubini's Theorem, we
  can compute
  \begin{align}
    & \EE \left[ \int_0^{T-\delta} \left( c_s - \frac{\nu_s \kappa_s}{c_s}
      - \frac{\kappa_s}{T-s} \right) ds \right] 
      \leq \EE \left[ \int_0^{T-\delta} \left( c_s 
      - \frac{\kappa_s}{T-s} \right) ds \right] \nonumber \\
    & \leq \EE \left[ \int_0^{T-\delta} \left( \frac{1}{(T-s)^2} 
      \EE \left[ \int_s^T (\kappa_u + (T-u)^2
      \nu_u) du \, \bigg\vert \, \cF_s \right] 
      - \frac{\kappa_s}{T-s} \right) ds \right] \nonumber \\
    & \leq \EE \left[ \int_0^{T-\delta} \frac{1}{(T-s)^2} \left( \int_s^T
      \kappa_u du \right) ds - \int_0^{T-\delta} \frac{\kappa_s}{T-s}
      ds \right] \nonumber \\
    & \hspace{15pt} +
      \EE \left[ \int_0^{T-\delta} \frac{1}{(T-s)^2} \left( \int_s^T
      (T-u)^2 \nu_u du \right) ds \right]. \label{app:proof:exp}
  \end{align} 
  Using once more Fubini's Theorem and the fact that $\kappa_t \leq K$
  for all $0 \leq t \leq T$, we get for the first expectation in
  \eqref{app:proof:exp} the estimate
  \begin{align}
    & \EE \left[ \int_0^{T-\delta} \frac{1}{(T-s)^2} \left( \int_s^T
      \kappa_u du \right) ds - \int_0^{T-\delta} \frac{\kappa_s}{T-s} ds
      \right] \nonumber \\
    & = \EE \left[ \int_0^{T-\delta} \frac{\kappa_u}{T-u} du +
      \int_{T-\delta}^T \frac{\kappa_u}{\delta} du - \frac{1}{T} \int_0^T
      \kappa_u du  - \int_0^{T-\delta} \frac{\kappa_s}{T-s} ds \right]
      \nonumber \\
    &  \leq K. \label{app:proof:exp1}
  \end{align} 
  Concerning the second expectation in \eqref{app:proof:exp},
  application of Fubini's Theorem yields
  \begin{align}
    & \EE \left[ \int_0^{T-\delta} \frac{1}{(T-s)^2} \left( \int_s^T
      (T-u)^2 \nu_u du \right) ds \right] \nonumber \\
    & \leq \EE \left[ \int_0^{T-\delta} (T-u) \nu_u du  + \delta \int_{T -
      \delta}^T \nu_u du \right]. \label{app:proof:exp2}
  \end{align} 
  Consequently, taking expectation in \eqref{app:proof:ito} and using
  that the stochastic integral with respect to $N$ in
  \eqref{app:proof:ito} is a true martingale on $[0,T-\delta]$ due to
  \eqref{app:proof:lp} and \eqref{app:condnu2}, we obtain
  together with the estimates in \eqref{app:proof:exp1} and
  \eqref{app:proof:exp2} the upper bound
  \begin{align*}
    & \frac{1}{8} \EE \left[ \int_0^{T-\delta} 
      \frac{\sqrt{T-s}}{c_s^{3/2}} d\langle c
      \rangle_s \right] \\
    & \leq \sqrt{T c_0} + \text{const} \,\left( K + 
      \EE \left[ \int_0^{T-\delta} (T-u) \nu_u du  + \delta \int_{T -
      \delta}^T \nu_u du \right] \right). 
  \end{align*}
  Passing to the limit $\delta \downarrow 0$ we get with monotone
  convergence
  \begin{align}
    & \EE \left[ \int_0^T \frac{\sqrt{T-s}}{c_s^{3/2}} d\langle c
      \rangle_s \right] \nonumber \\
    & \leq 8 \left( \sqrt{T c_0} + \text{const} \,\left( K +
      \EE \left[ \int_0^T (T-u) \nu_u du \right] \right) \right) <
      \infty, 
      \label{app:proof:intbound}
  \end{align}
  due to $\nu \in L^1(\PP \otimes dt)$. Now, using \eqref{app:lbound}, observe
  that we can further estimate the process $(c_t)_{0 \leq t < T}$
  from below by
  \begin{align*}
    c_s & \geq \EE \left[ \frac{1}{\int_s^T \frac{1}{\kappa_u} du 
          + \frac{1}{\eta}}
          \, \bigg\vert \, \cF_s \right] \geq 
          \EE \left[ \frac{1}{\int_s^T \frac{1}{\kappa_u} du +
          \frac{1}{\eta}} 1_{\{ \eta \, = \, + \infty \}}
          \, \bigg\vert \, \cF_s \right]  \\
        & = \EE \left[ \frac{1}{\int_s^T \frac{1}{\kappa_u} du } 1_{\{ \eta
          \, = \, + \infty \}}
          \, \bigg\vert \, \cF_s \right] \geq  \frac{k}{T-s} \EE \left[ 1_{\{
          \eta \, = \, + \infty \}}
          \, \vert \, \cF_s \right].
  \end{align*}
  Plugging back this lower bound into the left hand side of
  \eqref{app:proof:intbound} and using optional projection, we get
  \begin{align*}
    \infty & > \EE \left[ \int_0^T \frac{\sqrt{T-s}}{c_s^{3/2}} d\langle c
             \rangle_s \right] = \EE \left[ \int_0^T
             \frac{\sqrt{T-s}}{c_s^2} \sqrt{c_s} \, d\langle c
             \rangle_s \right] \\
           & \geq \sqrt{k} \, \EE \left[ \int_0^T
             \frac{1}{c_s^2} \EE \left[ 1_{\{
             \eta \, = \, + \infty \}}
             \, \vert \, \cF_s \right] \, d\langle c
             \rangle_s \right] = \sqrt{k} \, \EE \left[ \int_0^T
             \frac{1}{c_s^2} 1_{\{\eta \, = \, + \infty \}} \, d\langle c
             \rangle_s \right] \\
           & = \sqrt{k} \, \EE \left[ 1_{\{\eta \, = \, +
             \infty \}} \left(\int_0^T
             \frac{1}{c_s^2} \, d\langle c
             \rangle_s \right) \right],
  \end{align*}
  which yields the desired result.
\end{proof}


\bibliographystyle{plainnat} \bibliography{finance}

\end{document}